\newtheorem{thm}{Theorem}[section]
\newtheorem{lem}[thm]{Lemma}
\newtheorem{prop}[thm]{Proposition}
\newtheorem{cor}[thm]{Corollary}
\theoremstyle{definition}
\newtheorem*{notation}{Notation}
\renewcommand{\le}{\leqslant}
\renewcommand{\leq}{\leqslant}
\renewcommand{\ge}{\geqslant}
\renewcommand{\geq}{\geqslant}
\newcommand{\R}{\mathbb{R}}
\newcommand{\N}{\mathbb{N}}
\newcommand{\be}{\begin{enumerate}}
\newcommand{\ee}{\end{enumerate}}
\newcommand{\mC}{\mathcal{C}}
\newcommand{\mF}{\mathcal{F}}
\newcommand{\mN}{\mathcal{N}}
\newcommand{\ds}{\displaystyle}
\newcommand{\ps}[2]{\langle\,#1,#2\,\rangle}
\newcommand{\what}[1]{\widehat{#1}}
\newcommand{\llim}{\lim\limits}
\newcommand{\infl}{\inf\limits}
\newcommand{\suml}{\sum\limits}
\newcommand{\embed}{\hookrightarrow}
\newcommand{\weakto}{\rightharpoonup}
\newcommand{\Ds}{(-\Delta)^s}
\begin{document}

\title[Nonlinear fractional Schr\"odinger equation]{Positive solutions to some nonlinear fractional Schr\"odinger equations via a Min-Max procedure}
\author{Gilles \'Ev\'equoz}
\address{\ Institut f\"ur Mathematik, Johann Wolfgang Goethe-Universit\"at,
Robert-Mayer-Str. 10, 60054 Frankfurt am Main, Germany}
\email{evequoz@math.uni-frankfurt.de}
\author{Mouhamed Moustapha Fall}
\address{\ African Institute for Mathematical Sciences (A.I.M.S.) of Senegal, KM 2, Route de Joal, B.P. 1418 Mbour, S\'en\'egal }
\email{mouhamed.m.fall@aims-senegal.org}
\date{\today}
\keywords{Fractional Schr\"odinger equation, bound states, variational methods, topological degree, minimax principle.}

\begin{abstract}
The existence of a positive solution to the following fractional semilinear equation is proven, in a situation where a 
ground state solution may not exist.
More precisely, we consider for $0<s<1$ the equation
\begin{equation*}
 (-\Delta)^s u + V(x) u = Q(x)|u|^{p-2}u \quad  \textrm{ in } \R^N,\ N\geq 1,
\end{equation*}
where the exponent $p$ is superlinear but subcritical, and $V>0$, $Q\geq 0$ are bounded functions converging to
$1$ as $|x|\to\infty$. Using a min-max procedure introduced by Bahri and Li we prove the existence of a 
positive solution under one-sided asymptotic bounds for $V$ and $Q$.
\end{abstract}
%%%%%%%%%%%%%%%%%%%%%%%%%%%%%%%%%%%%%%%%%%%%%%%%%%%%%%%%%%%%%%
\maketitle

%%%%%%%%%%%%%%%%%%%%%%%%%%%%%%%%%
\section{Introduction and main result}
Recently Laskin, in  \cite{L1,L2}, derived an expansion of the Feynman path integral from Brownian-like 
to L\'evy-like quantum mechanical paths
yielding the fractional Schr\"{o}dinger equation:
\begin{equation}\label{eq:1}
\imath \frac{\partial \psi}{\partial t} = \Ds \psi + V(x) \psi ,
\end{equation}
where $(x,t) \in \mathbb{R}^N \times (0,+\infty)$, $0<s\leq 1$, and $V \colon \mathbb{R}^N \to \mathbb{R}$ 
is an external potential function.
When $s=1$, the L\'evy dynamics becomes the Brownian dynamics, and
equation (\ref{eq:1}) reduces to the classical Schr\"{o}dinger equation
\[
\imath \frac{\partial \psi}{\partial t} = -\Delta \psi + V(x) \psi.
\]
Standing wave solutions to  the fractional Schr\"{o}dinger equation  are solutions of the form
\begin{equation}\label{eq:2}
\psi(x,t)=\mathrm{e}^{-\imath   t} u(x),
\end{equation}
where $u$ solves the elliptic equation
\[
(-\Delta)^s u + V(x) u =0,
\]
for $s\in(0,1]$.
The  Schr\"{o}dinger equation with nonlinear source term has also its own interest, especially, when dealing with relativistic
particles ($s=1/2$), see for example \cite{AT, W87, bona97, ann97, maris, KMR, FL2013}.\\
In the present paper, we consider the following nonlinear  fractional Schr\"odinger equation
\begin{equation}\label{eqn:frac_schroed}
(-\Delta)^su +V(x) u = Q(x)|u|^{p-2}u\quad\text{in }\R^N,
\end{equation}
with $0<s<1$ and coefficients satisfying
\newpage
\begin{itemize}
  \item[(V)] $V\in L^\infty(\R^N)$, $\operatorname*{ess\ inf}\limits_{\R^N}V>0$ and  $V(x)\to 1$ as $|x|\to\infty$.

  \item[(Q)] $Q\in L^\infty(\R^N)$, $Q\geq 0$ a.e. on $\R^N$ and $Q(x)\to 1$ as $|x|\to\infty$.
\end{itemize}
Notice that the condition on the value of the asymptotic limit of $V$ and $Q$ is not restrictive, since the case $V(x)\to V_\infty>0$ and
$Q(x)\to Q_\infty>0$ can be treated by means of a rescaling.

We now state our main result:
\begin{thm}\label{thm:exist}
  Let $N\geq 1$, $0<s<1$ and $2<p< \frac{2N}{N-2s}$ for $N >2s$ and $p\in (2,\infty)$ for $2s\geq N=1$. 
  Suppose (V), (Q) and the following conditions hold:
  \begin{center}
  {\rm (H)} There exists $\kappa_1, \kappa_2\geq 0$ and $\alpha>N+2s$ such that for almost every $x\in\R^N$
  \begin{equation*}
    V(x)\leq 1 + \kappa_1 |x|^{-\alpha}\text{ and }\quad Q(x)\geq 1-\kappa_2 |x|^{-\alpha}.
  \end{equation*}
  \end{center}
  
  \begin{center}
  {\rm (U)} The limit problem $(-\Delta)^su+u=|u|^{p-2}u$ in $\R^N$ has (up to translations) a unique positive solution.
  \end{center}
Then \eqref{eqn:frac_schroed} has a positive solution.
\end{thm}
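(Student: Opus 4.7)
The plan is to treat \eqref{eqn:frac_schroed} variationally and then build, following the Bahri-Li strategy, a min-max critical level for the energy strictly between $c_\infty$ and $2c_\infty$, where $c_\infty$ is the mountain pass level of the limit problem. Setting
$$I(u) = \tfrac{1}{2}\int_{\R^N}\bigl(|\Ds^{1/2}u|^2 + V(x)u^2\bigr)\,dx - \tfrac{1}{p}\int_{\R^N} Q(x)(u^+)^p\,dx$$
on $H^s(\R^N)$, condition (V) ensures that the associated norm is equivalent to the standard fractional Sobolev norm, and any nontrivial critical point of $I$ produces a positive solution of \eqref{eqn:frac_schroed} by the strong maximum principle for $\Ds$. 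By (U) and results of Frank-Lenzmann-Silvestre, the limit functional $I_\infty$ (obtained by setting $V\equiv Q \equiv 1$) admits a unique positive radial ground state $\omega$ attaining the mountain pass level $c_\infty$, with sharp polynomial decay $\omega(x) \sim |x|^{-(N+2s)}$ at infinity.

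The first technical step is a Struwe-type splitting lemma adapted to the fractional setting: any Palais-Smale sequence for $I$ decomposes, up to subsequences, as a critical point of $I$ plus finitely many translated nontrivial solutions of the limit problem with divergent centers. Consequently, the PS-defect set is discrete and $I$ satisfies PS at every level $c \in (0, 2c_\infty) \setminus \{c_\infty\}$. Next, writing $\omega_y := \omega(\cdot - y)$, I would study the reduced energy
$$\Psi(y_1, y_2) = \max_{t_1, t_2 > 0} I\bigl(t_1 \omega_{y_1} + t_2 \omega_{y_2}\bigr)$$
and derive an asymptotic expansion of the form
$$\Psi(y_1, y_2) = 2c_\infty - A\,\omega(y_1 - y_2) + O\bigl(|y_1|^{-\alpha}\bigr) + O\bigl(|y_2|^{-\alpha}\bigr) + \text{h.o.t.},$$
with $A > 0$. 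Since $\omega(y_1 - y_2) \sim |y_1 - y_2|^{-(N+2s)}$ and $\alpha > N+2s$ by (H), for configurations in which $|y_1|$, $|y_2|$ and $|y_1 - y_2|$ are comparable and sufficiently large the attractive interaction strictly dominates both the $V - 1$ and $1 - Q$ contributions, giving $\Psi(y_1, y_2) < 2c_\infty$. I would then introduce a min-max class of maps $\sigma\colon \overline{B_R} \subset \R^N \to H^s(\R^N)$ whose boundary restriction is $y \mapsto T\omega_y$ for $T$ large; a Brouwer-degree argument shows that the image of any such map must pass through a configuration of energy $\ge c_\infty + \delta$, while the two-bump estimate exhibits an admissible map with supremum energy below $2c_\infty$. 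The resulting min-max value $c^*$ thus satisfies $c_\infty < c^* < 2c_\infty$, so by the splitting lemma $c^*$ is a critical value and produces the desired positive solution.

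The main obstacle is the sharp expansion of $\Psi$. Because $\omega$ decays only polynomially, in stark contrast with the exponential decay in the local case, the interaction integrals $\int \Ds^{1/2}\omega_{y_1}\cdot \Ds^{1/2}\omega_{y_2}\,dx$ and $\int\bigl[(\omega_{y_1}+\omega_{y_2})^p - \omega_{y_1}^p - \omega_{y_2}^p\bigr]\,dx$ are genuinely nonlocal and must be pinned down at leading order $\sim |y_1 - y_2|^{-(N+2s)}$ with the correct attractive sign. These in turn must be balanced against the perturbative contributions $\int(V-1)\omega_y^2$ and $\int(1-Q)\omega_y^p$, controlled by $|y|^{-\alpha}$ thanks to (H). The threshold $\alpha > N+2s$ is precisely what enables the interaction term to dominate, and turning this heuristic into a rigorous strict inequality, uniformly on the relevant region of configuration space, is the crucial and most delicate quantitative step of the argument.
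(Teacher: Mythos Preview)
Your outline follows the same Bahri--Li route as the paper: a splitting lemma for Palais--Smale sequences, an energy estimate showing that two interacting bumps have energy strictly below $2c_\infty$ thanks to the polynomial decay of $\omega$ and the hypothesis $\alpha>N+2s$, and a min-max over maps from an $N$-ball whose boundary traces translates of the ground state. The technical variants you choose---the truncated nonlinearity $(u^+)^p$ in place of $|u|^p$, and the two-parameter reduction $\max_{t_1,t_2}I(t_1\omega_{y_1}+t_2\omega_{y_2})$ in place of the paper's projection of convex combinations onto the Nehari manifold---are harmless and lead to the same estimates.

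There is, however, a genuine gap: the strict lower bound $c^*>c_\infty$. You assert that ``a Brouwer-degree argument shows that the image of any such map must pass through a configuration of energy $\geq c_\infty+\delta$'', but degree alone does not manufacture such a $\delta$. The paper makes this step precise via a barycenter map
\[
\beta(u)=\|u\|_{L^p}^{-p}\int_{\R^N}\frac{x}{|x|}\,|u(x)|^p\,dx\in B_1(0),
\]
showing that for any admissible $\gamma$ one has $\deg(\beta\circ\gamma,\Omega_R,b)=1$, so that $\gamma$ must meet $\{u\in\mN:\beta(u)=b\}$ and hence $c_0\geq I_b:=\inf\{J(u):u\in\mN,\ \beta(u)=b\}$. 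But $I_b>c_\infty$ is \emph{not} automatic. The paper must treat separately the case $I_b=c_\infty$ for some $|b|<1$ (its Case~1): there a minimizing sequence for $I_b$ is shown---via Ekeland's principle, the splitting lemma, and the observation that a single escaping bump has barycenter tending to the unit sphere, contradicting $|\beta(u_n)|\leq\frac{1+|b|}{2}$---to converge to a nontrivial critical point at level $c_\infty$. Your outline omits both the barycenter device and this dichotomy; without them the inequality $c^*>c_\infty$, and hence the avoidance of the one bad Palais--Smale level, is unjustified.
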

The assumption (U) has been shown to hold in the physically relevant special case of the Benjamin-Ono equation, 
see \cite{AT}, where uniqueness was proved for $s=\frac12$, $N=1$ and $p=2$. In the general case $s\in(0,1)$, by now only 
the ground-state (i.e. least energy) solution of the limit problem is known to be unique (up to translations), see \cite{fv2013}, \cite{FL2013} and \cite{FLS2013}, 
and therefore the assumption (U) needs to be imposed. In the local case $s=1$, the celebrated result by Kwong \cite{Kwong} shows that the limit
problem has a unique positive solution, but in the fractional setting, the question is much more involved, since ODE methods
are not directly applicable anymore. Nevertheless, it is expected that the uniqueness of the positive solution of the limit problem 
holds for most $s\in(0,1)$.

Concerning the regularity of the solutions given by the preceding theorem, 
we note that a standard bootstrap argument based on Sobolev embeddings together with 
\cite[Lemma B.1 and Proposition B.3]{FLS2013} implies that $u\in H^{2s}(\R^N)\cap L^\infty(\R^N)\cap C^{0,\alpha}(\R^N)$
for any $0<\alpha<2s$. In particular, $|u(x)|\to 0$, as $|x|\to\infty$.

Since only recently, intensive work has  been devoted  to the study of \eqref{eqn:frac_schroed} for all $s\in(0,1)$, 
also with general subcritical right-hand side of the form $f(x,u)$.
In  \cite{FQT2013}, existence, asymptotic behavior and symmetry properties of the solutions were studied.
In \cite{Cho} the author gave an existence result for \eqref{eqn:frac_schroed} with $V=1$ while $Q$ was assumed to be 
positive only in a set of positive measure.
In  \cite{Secchi2}, several existence results were proved for problem \eqref{eqn:frac_schroed} with 
more general nonlinearities on the right hand side, generalizing \cite{Cheng}.
For related works about existence and qualitative of solutions, one can also 
see \cite{dipierro}, \cite{fv2013}, \cite{Feng}, \cite{FLS2013} and \cite{Secchi1}.

Our main result in Theorem \ref{thm:exist} is not contained in the afore mentioned papers  
because of the mild assumptions on $V$ and $Q$. In addition, in our  proof, the tools involved
to get positive solution are different.
Indeed, our scheme of proof is based on min-max arguments in the spirit 
of \cite{Bahri-Coron, BaLi90, BaLions97} applied to the energy functional associated with
\eqref{eqn:frac_schroed} on the Nehari manifold (or natural constraint).
The polynomial bounds on $V$ and $Q$ are reminiscent of the exponential 
decay estimates assumed in \cite{BaLi90} and \cite{BaLions97}, and
are indeed related to the asymptotic behavior of ground states for the limit 
problem $\Ds u+u= u^p$. When $s=1$ ground states decay exponentially while for $s\in (0,1)$ they 
decay polynomially (and never exponentially, see \cite{FQT2013}!).
We point out that, since $\Ds +V$ is positive definite we can work with the 
usual Nehari manifold, in contrast to \cite{ew12} where
an indefinite problem was treated using the generalized Nehari set.
%On the other hand, we had to include the additional assumption (U), since by now 
%only the uniqueness (up to translations) of the ground-state (i.e. least energy) solution of the limit problem 
%is known (see \cite{fv2013}, \cite{FL2013} and \cite{FLS2013}). 
%In the local case $s=1$, the celebrated result by Kwong \cite{Kwong} shows that the limit
%problem has a unique positive solution. In the fractional setting, the question is much more involved, since ODE methods
%are not directly applicable anymore, and condition (U) has been shown to hold only in very special cases, see e.g. \cite{AT}.
%Nevertheless, it is expected that the uniqueness of the positive solution of the limit problem holds for most $s\in(0,1)$.

%On the other hand, let us mention that the proof could easily be extended to include more general
%nonlinearities as in \cite{ew12}, provided the uniqueness of the positive solution of the corresponding 
%limit problem, together with suitable asymptotic estimates,
%could be shown. 
Finally, we would like to point out that in Theorem \ref{thm:exist} there is no restriction 
in assuming that the exponent $\alpha$ in condition $(H)$ satisfies
$$
 N+2s<\alpha<\min\{2(N+2s), \frac{p}{2}(N+2s)\},
$$
since $p>2$.

The paper is organized as follows. In Section \ref{sec:prelim} we set up the variational framework for 
the study of \eqref{eqn:frac_schroed}. In particular we show
that the energy of any weak solution of \eqref{eqn:frac_schroed} which changes sign must be higher 
than twice the ground state energy.
In Section~\ref{sec:asympt_estim}, we give a splitting result for Palais-Smale sequences in 
the spirit of \cite{benci-cerami87}, and prove the main energy 
estimate which will allow us in Section \ref{sec:existence_proof} to define, using a barycenter 
map, a min-max critical level and complete the proof of Theorem~\ref{thm:exist}.

%%%%%%%%%%%%%%%%%%%%%%%%%%%%%%%%%%
\section{Preliminaries}\label{sec:prelim}
%%%%%%%%%%%%%%%%%%%%%%%%%%%%%%%%%%
Throughout this paper we shall use the following notation. For a function $u$ on $\R^N$ and an element $y\in\R^N$,
we write $y\ast u$ for the translate of $u$ by $y$, {\em i.e.},
\[(y\ast u)(x):=u(x-y),\quad x\in\R^N.\]
Since condition (V) holds, the spectrum of the unbounded operator $S=(-\Delta)^s+V$ acting in $L^2(\R^N)$ is contained
in $(0,\infty)$ and, therefore, $$\|u\|_V:=\left(\int_{\R^N}|\xi|^{2s}|\what{u}(\xi)|^2\, d\xi + \int_{\R^N}V(x)|u(x)|^2\, dx\right)^{\frac12}$$
defines a norm on $H^s(\R^N)$, equivalent to the standard norm 
$$\|u\|_s=\left(\int_{\R^N}(|\xi|^{2s}+1)|\what{u}(\xi)|^2\, d\xi\right)^{\frac12}.$$
The weak solutions of \eqref{eqn:frac_schroed} are critical points of the energy functional $J$: $H^s(\R^N)$ $\to$ $\R$ given by
\[J(u)= \frac{1}{2}\|u\|_V^2 - \frac1p\int_{\R^N}Q(x)|u(x)|^p\, dx, \quad u\in H^s(\R^N).\]
We consider the Nehari manifold (or natural constraint)
\[\mN=\{u\in H^s(\R^N)\backslash\{0\}\, :\, J'(u)(u)=0\}\]
 which contains all critical points of $J$ and set
\begin{equation}\label{eq:c}
c=\inf_{u\in\mN}J(u).
\end{equation}
For the limit problem
\begin{equation}\label{eqn:frac_schroed_inf}
(-\Delta)^su + u = |u|^{p-2}u\quad\text{in }\R^N,
\end{equation}
we denote by $J_\infty$ the associated energy functional given by
\[J_\infty(u)=\frac12\|u\|_s^2 - \frac1p\int_{\R^N}|u|^p\, dx,\quad u\in H^s(\R^N),\]
consider the corresponding Nehari manifold
\[\mN_\infty=\{u\in H^s(\R^N)\backslash\{0\}\,:\, J_\infty'(u)u=0\}\]
and let
$$
c_\infty=\infl_{u\in\mN_\infty}J_\infty(u).
$$
We start by giving some properties of the Nehari manifold $\mN$ and study the behavior of $J$ on
it. Some of the following results can be found in \cite{WILLEM}.
\begin{lem}\label{lem:Nehari_f}
  Under the conditions (V) and (Q), the following holds:
  \begin{itemize}
    \item[(i)] $\mN\neq\emptyset$. More precisely, for every $u\in H^s(\R^N)$ with $\int_{\R^N}Q(x)|u(x)|^p\, dx>0$,
    there exists a unique $t_u\in(0,\infty)$, given by
    \begin{equation}\label{eqn:t_u}
    t_u=\left(\frac{\|u\|_V^2}{\int_{\R^N}Q(x)|u(x)|^p\, dx}\right)^{\frac{1}{p-2}},
    \end{equation}
    such that $t_uu\in\mN$ and $J(t_uu)>J(t u)$ for all $t\geq 0$, $t\neq t_u$. Consequently,
    \begin{equation}\label{eqn:c_alt}
    c=\infl_{u\in\mN}J(u)=\infl_{\substack{u\in H^s(\R^N)\\ u\neq 0}}\sup\limits_{t>0}J(tu).
    \end{equation}
    \item[(ii)] $c>0$, $\inf\limits_{u\in\mN}\|u\|_V>0$ and $\inf\limits_{u\in\mN}\|u\|_{L^p}>0$.
    \item[(iii)] $J$ is coercive on $\mN$, i.e. if $(u_n)_n\subset\mN$ satisfies $\|u_n\|_V\to\infty$ as $n\to\infty$, 
    then $J(u_n)\to \infty$ as $n\to\infty$.
  \end{itemize}
\end{lem}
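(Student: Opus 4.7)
All three assertions follow from a careful analysis of the one-variable function $t\mapsto J(tu)$ together with the Sobolev embedding $H^s(\R^N)\embed L^p(\R^N)$, which is continuous under the assumption on $p$.

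For (i), I would fix $u\in H^s(\R^N)$ with $\int_{\R^N}Q|u|^p\,dx>0$ and set
\[g(t):=J(tu)=\frac{t^2}{2}\|u\|_V^2-\frac{t^p}{p}\int_{\R^N}Q(x)|u(x)|^p\,dx,\qquad t\ge 0.\]
Since $p>2$, one has $g(0)=0$, $g(t)>0$ for small $t>0$, $g(t)\to-\infty$ as $t\to\infty$, and $g'(t)=t\bigl(\|u\|_V^2-t^{p-2}\int_{\R^N}Q|u|^p\,dx\bigr)$ vanishes on $(0,\infty)$ at exactly the value $t_u$ displayed in \eqref{eqn:t_u}. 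This $t_u$ is therefore the strict global maximizer of $g$ on $[0,\infty)$, and from $J'(t_uu)(t_uu)=t_u g'(t_u)=0$ it follows that $t_uu\in\mN$; in particular $\mN\ne\emptyset$, since any $u\in C_c^\infty(\R^N)$ supported in a large ball (where $Q>0$ by condition (Q)) yields $\int_{\R^N}Q|u|^p\,dx>0$. For the second equality in \eqref{eqn:c_alt}, each $u\in\mN$ satisfies $\int_{\R^N}Q|u|^p\,dx=\|u\|_V^2>0$, hence $t_u=1$ and $J(u)=\sup_{t>0}J(tu)$; conversely, for a nonzero $u\in H^s(\R^N)$ either $\int_{\R^N}Q|u|^p\,dx=0$ (so $\sup_{t>0}J(tu)=+\infty$ and the supremum plays no role in the infimum) or $\int_{\R^N}Q|u|^p\,dx>0$, in which case the supremum is attained at $t_uu\in\mN$.

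For (ii), I would combine the Nehari identity $\|u\|_V^2=\int_{\R^N}Q|u|^p\,dx$ with the embedding $H^s(\R^N)\embed L^p(\R^N)$ and the equivalence $\|\cdot\|_V\sim\|\cdot\|_s$ to obtain, for every $u\in\mN$,
\[\|u\|_V^2=\int_{\R^N}Q|u|^p\,dx\le\|Q\|_\infty\,C\,\|u\|_V^p.\]
Since $p>2$ and $u\ne 0$, this forces a uniform lower bound $\|u\|_V\ge\rho>0$ on $\mN$; the same chain gives $\|u\|_{L^p}\ge\rho'>0$. Substituting the Nehari constraint back into $J$ produces the identity
\[J(u)=\Bigl(\frac12-\frac1p\Bigr)\|u\|_V^2\quad\text{for every }u\in\mN,\]
so $c\ge(\tfrac12-\tfrac1p)\rho^2>0$. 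Part (iii) is then immediate from the same identity: if $(u_n)\subset\mN$ and $\|u_n\|_V\to\infty$, then $J(u_n)=(\tfrac12-\tfrac1p)\|u_n\|_V^2\to\infty$.

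I do not expect any real obstacle; this is the standard Nehari-manifold bookkeeping, made painless by the fact that $\Ds+V$ is positive definite (so that $\|\cdot\|_V$ is a genuine equivalent norm on $H^s(\R^N)$). The only mild point requiring a line of care is the handling of nonzero $u$ with $\int_{\R^N}Q|u|^p\,dx=0$ when proving the second equality of \eqref{eqn:c_alt}, but these functions contribute $+\infty$ to the supremum and so cannot lower the infimum.
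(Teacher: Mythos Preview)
Your proposal is correct and follows essentially the same approach as the paper: the analysis of $t\mapsto J(tu)$ in (i), the combination of the Nehari identity with the Sobolev embedding in (ii), and the identity $J(u)=(\tfrac12-\tfrac1p)\|u\|_V^2$ on $\mN$ for (iii) are exactly the ingredients the paper uses. The only cosmetic difference is that in (ii) the paper first proves $c>0$ by restricting $J$ to a small sphere and then deduces the lower bounds on $\|u\|_V$ and $\|u\|_{L^p}$ from $c\le J(u)=(\tfrac12-\tfrac1p)\|u\|_V^2$, whereas you reverse the order; the content is identical.
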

\begin{proof}
(i) Let us consider $u\in H^s(\R^N)$ satisfying $\int_{\R^N}Q(x)|u(x)|^p\, dx>0$. 
(Note that the assumption (Q) gives some $r>0$ such that $Q(x)\geq \frac12$ 
for a.e. $x$ with $|x|\geq r$, thereby ensuring the existence of such functions $u$.)
 For $t>0$ there holds
$$
\frac{d}{dt}J(tu)=J'(tu)u=t\left(\|u\|_V^2-t^{p-2}\int_{\R^N}Q(x)|u(x)|^p\, dx\right).
$$
Setting $t_u>0$ as in \eqref{eqn:t_u}, we find that the map $t\mapsto J(tu)$ is strictly increasing for $0<t<t_u$ 
and strictly decreasing for $t>t_u$.
Thus, $t_uu\in\mN$ and $J(t_uu)$ is the (unique) strict global maximum of $t\mapsto J(tu)$ on $[0,\infty)$.
Remarking in addition that $J(tu)\to\infty$ as $t\to\infty$ in the case where $\int_{\R^N}Q(x)|u(x)|^p\, dx=0$, the assertion follows.\\
(ii) Let $\delta>0$. For each $u\in\mN$, it follows from (i), that  $J(\frac{\delta}{\|u\|_V}u)\leq J(u)$ holds. Therefore,
$c=\infl_{u\in\mN}J(u)\geq \inf\{J(w)\, :\, \|w\|_V=\delta\}$ for any $\delta>0$. Now from the Sobolev embedding 
$H^s(\R^N)\embed L^p(\R^N)$,
there is some constant $C>0$ such that for all $w\in H^s(\R^N)$ with $\|w\|_V=\delta$,
$$
J(w)\geq \frac12\|w\|_V^2-\frac1pC^p\|Q\|_\infty\|w\|_V^p=\delta^2\left(\frac12-\delta^{p-2}\frac{C^p\|Q\|_\infty}{p}\right).
$$
Since the last expression is positive for sufficiently small $\delta$, we obtain $c>0$. 
To prove the second and third statements in (ii) as well
as the property (iii), it suffices to notice that if $u\in\mN$, then
$c\leq J(u)=\left(\frac12-\frac1p\right)\|u\|_V^2\leq\left(\frac12-\frac1p\right)\|Q\|_\infty\|u\|_{L^p}^p$ holds.
\end{proof}
Let us point out that the same result holds with $J_\infty$, $c_\infty$ and $\mN_\infty$ in place of $J$, $c$ and $\mN$, respectively.
Minimizers of $J_\infty$ on $\mN_\infty$ are critical points of $J_\infty$ on $H^s(\R^N)$ with least 
possible energy among all nontrivial critical points.
For this reason, they are called {\em ground states} of \eqref{eqn:frac_schroed_inf}.
According to \cite[Proposition 3.1]{FLS2013},  there is (up to translation) a  unique
ground state solution $u_\infty\in H^{2s+1}(\R^N)\cap\mC^\infty(\R^N)$ of
\eqref{eqn:frac_schroed_inf}, positive, radially symmetric, radially
decreasing and which satisfies the following asymptotic decay properties:
\begin{equation}\label{eqn:decay_u_infty}
\frac{C_1}{1+|x|^{N+2s}}\leq u_\infty(x)\leq\frac{C_2}{1+|x|^{N+2s}}\quad\text{for all }x\in\R^N,
\end{equation}
where $0<C_1< C_2$. Our next result states that critical points $u$ of $J$ with $J(u)\leq 2c$ cannot 
change sign, where we recall that $c$ is defined in \eqref{eq:c}.
\begin{prop}\label{prop:sign}
Let $u\in H^s(\R^N)$ be a nontrivial sign-changing critical point of $J$ (resp. $J_\infty$). Then  $J(u)>2c$ 
(resp. $J_\infty(u)>2c_\infty$).
\end{prop}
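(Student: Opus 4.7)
The plan is to combine the sign-decomposition $u = u^+ - u^-$ with the Nehari projections $t^\pm u^\pm \in \mN$ supplied by Lemma~\ref{lem:Nehari_f}(i). The essential difference from the local case $s=1$ is that the quadratic form associated with $(-\Delta)^s$ does not decouple between the positive and negative parts of $u$; as we shall see, this nonlocal coupling is precisely what produces the \emph{strict} inequality.

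Using the Gagliardo bilinear representation
\begin{equation*}
\langle (-\Delta)^s v, w\rangle_{L^2} = \frac{C_{N,s}}{2}\iint_{\R^{2N}} \frac{(v(x)-v(y))(w(x)-w(y))}{|x-y|^{N+2s}}\,dx\,dy,
\end{equation*}
together with the pointwise identity $u^+ u^- \equiv 0$, a direct computation gives
\begin{equation*}
\|u\|_V^2 = \|u^+\|_V^2 + \|u^-\|_V^2 - 2K,\qquad K := \frac{C_{N,s}}{2}\iint_{\R^{2N}} \frac{(u^+(x)-u^+(y))(u^-(x)-u^-(y))}{|x-y|^{N+2s}}\,dx\,dy,
\end{equation*}
and, testing the equation $J'(u)=0$ against $u^+$, the identity $\|u^+\|_V^2 - K = \int_{\R^N} Q (u^+)^p\,dx$ (with an analogous identity for $u^-$). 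The main delicate point is to prove $K<0$: a short case analysis on the signs of $u(x)$ and $u(y)$ shows that the integrand defining $K$ is everywhere nonpositive and is strictly negative whenever $u(x)u(y)<0$, and the sign-changing hypothesis on $u$ forces this configuration to occur on a set of positive measure in $\R^{2N}$.

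Once $K<0$ is established, the identity $\|u^+\|_V^2 - K = \int Q(u^+)^p$ gives $\|u^+\|_V^2 < \int Q(u^+)^p$ strictly; by Lemma~\ref{lem:Nehari_f}(i) there is therefore a unique $t^+\in(0,1)$ with $t^+u^+\in\mN$, and similarly $t^-\in(0,1)$ with $t^-u^-\in\mN$, both satisfying $J(t^\pm u^\pm)\geq c$. Since $J = (\tfrac12 - \tfrac1p)\|\cdot\|_V^2$ on $\mN$ and $(t^\pm)^2<1$, chaining these facts gives
\begin{equation*}
J(u) = \left(\tfrac12-\tfrac1p\right)\|u\|_V^2 > \left(\tfrac12-\tfrac1p\right)\bigl((t^+)^2\|u^+\|_V^2 + (t^-)^2\|u^-\|_V^2\bigr) = J(t^+u^+) + J(t^-u^-) \geq 2c,
\end{equation*}
as claimed. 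The proof for $J_\infty$ is verbatim the same with $V\equiv Q\equiv 1$.
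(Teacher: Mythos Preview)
Your proof is correct and follows essentially the same approach as the paper: decompose $u=u^+-u^-$, observe that the nonlocal cross term $K=\langle u^+,u^-\rangle_V$ is strictly negative, use this together with $J'(u)u^\pm=0$ to force the Nehari projections $t^\pm<1$, and then compare $J(u)$ with $J(t^+u^+)+J(t^-u^-)\geq 2c$. The only cosmetic difference is that the paper closes the final chain on the $L^p$ side of the Nehari identity (bounding $J(t_\pm u_\pm)<(\tfrac12-\tfrac1p)\int Q(u^\pm)^p$ and summing to $J(u)$), whereas you use the $\|\cdot\|_V^2$ side together with $\|u\|_V^2>\|u^+\|_V^2+\|u^-\|_V^2$; the two routes are equivalent.
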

\begin{proof}
Let $u\in H^s(\R^N)$ be a nontrivial sign-changing critical point of $J$ and let $u_\pm=\max\{\pm u, 0\}$
denote the positive and negative part of $u$, respectively. Then $u_\pm\in H^s(\R^N)$ and $u=u_+-u_-$.
Moreover, letting $\ps{\cdot}{\cdot}_V$ denote the scalar product that induces the norm $\|\cdot\|_V$, the identity
$$
\ps{v}{w}_V=C_{N,s} \int\int_{\R^N\times\R^N}\frac{(v(x)-v(y))(w(x)-w(y))}{|x-y|^{N+2s}}\, dxdy + \int_{\R^N}V(x)vw\, dx,
$$
which holds for all $v,w\in H^s(\R^N)$, gives
$$
\ps{u_+}{u_-}_V=2C_{N,s}\ P.V.\int\int_{\{u>0\}\times\{u<0\}}\frac{u(x)u(y)}{|x-y|^{N+2s}}\, dxdy<0,
$$
for some $C_{N,s}>0$  and where $\text{P.V.}$ denotes the fact that the integral is taken in the `principal value' sense.
Now, from $0=J'(u)u_\pm=\ps{u}{u_\pm}_V\mp\int_{\R^N}Q(x)u_\pm^p\, dx$, we deduce that
$\int_{\R^N}Q(x)u_\pm^p\, dx=\|u_\pm\|_V^2-\ps{u_+}{u_-}_V>0$. Using \eqref{eqn:t_u} we can therefore find $t_\pm>0$, given
by
$$
t_\pm^{p-2} =\frac{\|u_\pm\|_V^2}{\int_{\R^N}Q(x)u_\pm^p\, dx}=\frac{\|u_\pm\|_V^2}{\|u_\pm\|_V^2-\ps{u_+}{u_-}_V}<1,
$$
such that $t_\pm u_\pm\in\mN$. Consequently,
$$
2c\leq J(t_+ u_+)+J(t_-u_-)<\left(\frac12-\frac1p\right) \left(\int_{\R^N}Q(x)u_+^p\,dx+\int_{\R^N}Q(x)u_-^p\, dx\right)=J(u),
$$
and thus $J(u)> 2c$ holds for any sign-changing critical point $u$ of $J$. The corresponding assertion for $J_\infty$ follows by
replacing $V(x)$ and $Q(x)$ by $1$ in the preceding arguments.
\end{proof}

\begin{cor}\label{cor:positive}
Let $u$ be  a nontrivial critical point of $J$ such that $J(u)\leq
2c$, where   $c$ is defined in \eqref{eq:c}. Then $u$ is positive in
$\R^N$
\end{cor}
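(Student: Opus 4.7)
My plan is to combine the sign-change obstruction of Proposition~\ref{prop:sign} with a strong maximum principle for the fractional Laplacian, in two short steps.

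First, I would apply Proposition~\ref{prop:sign} directly: since $J(u)\leq 2c$ whereas any sign-changing nontrivial critical point would satisfy $J(u)>2c$, the function $u$ cannot change sign. Both the equation $(-\Delta)^s u + V(x) u = Q(x)|u|^{p-2}u$ and the functional $J$ are invariant under the substitution $u\mapsto -u$, so without loss of generality I may assume $u\geq 0$ a.e.\ on $\R^N$. It then remains only to upgrade this to strict pointwise positivity.

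For the second step, I would rewrite the equation in the pointwise form $(-\Delta)^s u(x) = Q(x)u(x)^{p-1} - V(x)u(x)$ and invoke the strong maximum principle for $(-\Delta)^s$. The regularity remark just after the statement of Theorem~\ref{thm:exist} gives $u\in C^{0,\alpha}(\R^N)\cap L^\infty(\R^N)$, so $(-\Delta)^s u$ has a pointwise representation as a singular integral. If $u(x_0)=0$ at some $x_0\in\R^N$, then
\begin{equation*}
(-\Delta)^s u(x_0) \,=\, -C_{N,s}\,\mathrm{P.V.}\!\int_{\R^N}\frac{u(y)}{|x_0-y|^{N+2s}}\,dy \,<\, 0,
\end{equation*}
since $u\geq 0$ and $u\not\equiv 0$. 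On the other hand, evaluating the equation at $x_0$ gives $(-\Delta)^s u(x_0) = Q(x_0)\cdot 0 - V(x_0)\cdot 0 = 0$, which is a contradiction. Hence $u>0$ everywhere in $\R^N$.

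The only real subtlety is justifying the pointwise singular-integral formula for $(-\Delta)^s u(x_0)$, but this is standard given the H\"older continuity and boundedness already recorded immediately after Theorem~\ref{thm:exist}. I therefore expect no substantive obstacle: the content of the corollary is essentially Proposition~\ref{prop:sign} plus a textbook application of the fractional strong maximum principle.
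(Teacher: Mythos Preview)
Your proposal is correct and follows essentially the same two-step route as the paper: Proposition~\ref{prop:sign} rules out sign change, and then the fractional strong maximum principle upgrades $u\geq 0$ to $u>0$. The only difference is cosmetic: the paper cites an external strong maximum principle (\cite[Corollary~3.4 and Remark~3.5]{FJ}) for the operator $(-\Delta)^s+V$, whereas you spell out the standard singular-integral contradiction at a putative zero $x_0$; your explicit handling of the sign ambiguity via $u\mapsto -u$ is also a small improvement in presentation.
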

\begin{proof}
By Proposition \ref{prop:sign}, $u\geq 0$   in $\R^N$. In  particular, we have
$\Ds u+V(x)u\geq 0$ in $\R^N$.
Since $u\neq 0$,  it follows from the strong maximum principle \cite[Corollary 3.4 and Remark 3.5]{FJ} that  $u>0$ in $\R^N$.
\end{proof}
We now give a first existence result in the case where the ground state energy level $c$ is strictly less than the ground state level of
the limit problem. In this case, we show that a ground state for \eqref{eqn:frac_schroed} exists, i.e., there exists a weak solution $u$ of
\eqref{eqn:frac_schroed} having minimal energy $J(u)=c$ among all nontrivial solutions. Notice that the conditions (V) and (Q) ensure
that $c\leq c_\infty$ holds in any case.
\begin{prop}\label{prop:c<cinf_f}
  Suppose that (V) and (Q) are satisfied.
  If $c<c_\infty$ holds, then $J$ has a critical point $u$ which satisfies $J(u)=c$. In particular \eqref{eqn:frac_schroed} has a positive
  solution.
\end{prop}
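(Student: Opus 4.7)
The plan is to apply the direct method of the calculus of variations on the Nehari manifold $\mN$ and to exploit the strict inequality $c<c_\infty$ to prevent loss of compactness at infinity via a concentration-compactness argument. First I would take a minimizing sequence $(u_n)\subset\mN$ with $J(u_n)\to c$. Since the constraint function $\Psi(u)=\|u\|_V^2-\int_{\R^N}Q(x)|u|^p\,dx$ satisfies $\Psi'(u)u=-(p-2)\|u\|_V^2$, which is bounded away from $0$ on $\mN$ by Lemma~\ref{lem:Nehari_f}(ii), $\mN$ is a $C^1$-manifold, and Ekeland's variational principle together with the standard elimination of the Lagrange multiplier yields a \emph{free} Palais-Smale sequence, still denoted $(u_n)$, with $J(u_n)\to c$ and $J'(u_n)\to 0$ in $H^{-s}(\R^N)$. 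By the coercivity in Lemma~\ref{lem:Nehari_f}(iii), $(u_n)$ is bounded in $H^s(\R^N)$; up to a subsequence, $u_n\weakto u$ in $H^s(\R^N)$ and a.e.\ on $\R^N$.

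The crucial step is to show $u\not\equiv 0$; this is where $c<c_\infty$ is used. If $u\equiv 0$, then since $\infl_{w\in\mN}\|w\|_{L^p}>0$ by Lemma~\ref{lem:Nehari_f}(ii), the sequence $(u_n)$ cannot vanish in the sense of P.-L.~Lions. Hence there exist $\delta>0$ and $(y_n)\subset\R^N$ with $\int_{B(y_n,1)}|u_n|^2\,dx\geq\delta$, and because $u_n\to 0$ locally in $L^2$ we must have $|y_n|\to\infty$. Setting $v_n(x):=u_n(x+y_n)$, a further subsequence satisfies $v_n\weakto v\not\equiv 0$ in $H^s(\R^N)$. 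Since $V(\cdot+y_n)\to 1$ and $Q(\cdot+y_n)\to 1$ a.e.\ on $\R^N$, passing to the limit in $J'(u_n)\varphi(\cdot-y_n)=o(1)$ for every $\varphi\in C^\infty_c(\R^N)$ identifies $v$ as a nontrivial critical point of $J_\infty$, so $v\in\mN_\infty$. Using translation invariance of the fractional Laplacian and a standard cutoff argument based on $V\to 1$ at infinity, one gets $\|v_n\|_s^2-\|u_n\|_V^2\to 0$, and combining this with the Nehari identities and weak lower semicontinuity of $\|\cdot\|_s^2$ yields
\[
c_\infty\le J_\infty(v)=\Bigl(\tfrac12-\tfrac1p\Bigr)\|v\|_s^2\le\Bigl(\tfrac12-\tfrac1p\Bigr)\liminf_n\|v_n\|_s^2=\liminf_n J(u_n)=c,
\]
contradicting $c<c_\infty$.

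Therefore $u\neq 0$. Passing to the limit in $J'(u_n)\to 0$ (using a.e.\ convergence, boundedness in $H^s$ and the subcritical growth of the nonlinearity) shows $J'(u)=0$, so $u\in\mN$ and $J(u)\ge c$. Conversely, the Nehari identity $J(u)=(\tfrac12-\tfrac1p)\|u\|_V^2$ together with weak lower semicontinuity of $\|\cdot\|_V$ gives $J(u)\le\liminf_n J(u_n)=c$. Hence $J(u)=c$, so $u$ is a ground state of \eqref{eqn:frac_schroed}, and because $J(u)=c\le 2c$, Corollary~\ref{cor:positive} shows that $u$ is positive in $\R^N$.

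The only genuinely delicate step is the nontriviality of the weak limit: without the hypothesis $c<c_\infty$ the mass of a minimizing sequence could escape to infinity and be recaptured only as a solution of the limit problem. The strict inequality is precisely what forbids this, since the profile at infinity $v$ would be a candidate in $\mN_\infty$ with energy $\le c<c_\infty$.
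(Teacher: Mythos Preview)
Your argument is correct and follows the same overall concentration--compactness scheme as the paper: take a minimizing sequence on $\mN$, upgrade it via Ekeland to a bounded Palais--Smale sequence, pass to a weak limit, and rule out $u=0$ using $c<c_\infty$.

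The only real difference lies in how the contradiction $c\geq c_\infty$ is extracted when $u=0$. The paper does \emph{not} identify the weak limit of the translated sequence as a critical point of $J_\infty$. Instead it uses the Nehari maximality property: for each $n$ one has $J(u_n)\geq J(t_n u_n)$ for every $t_n>0$, and choosing $t_n$ so that $t_n(y_n\ast u_n)\in\mN_\infty$ gives
\[
J(u_n)\geq J_\infty\bigl(t_n(y_n\ast u_n)\bigr)+\text{(error terms in }V-1,\ Q-1\text{)},
\]
with the error terms shown to vanish by dominated convergence; hence $c\geq c_\infty$. Your route instead passes to the limit in $J'(u_n)\varphi(\cdot-y_n)=o(1)$ to recognise the weak limit $v$ of $v_n=u_n(\cdot+y_n)$ as an actual critical point of $J_\infty$, and then uses weak lower semicontinuity of $\|\cdot\|_s$ together with $\|v_n\|_s^2-\|u_n\|_V^2\to 0$ (which follows from $u_n\rightharpoonup 0$ and $V\to 1$ at infinity) to conclude $c_\infty\leq J_\infty(v)\leq c$. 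Both devices are standard; the paper's avoids proving that $v$ solves the limit equation, while yours avoids verifying the boundedness of the projection factors $t_n$. Either way the rest of the proof (including the appeal to Corollary~\ref{cor:positive} for positivity) coincides.
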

\begin{proof}
Let $(v_n)_n\subset \mN$ be a minimizing sequence for $J$.
Since $J$ is of class $C^2$, $J''(v)(v,v)=(2-p)\|v\|_V^2<0$ for all $v\in\mN$ and since $0$ is an isolated point of 
$\{u\in H^s(\R^N)\,:\, J'(u)u=0\}$,
we find that $\mN$ is a closed $C^1$-submanifold of codimension $1$ of the Hilbert space $H^s(\R^N)$ and, 
from Lemma \ref{lem:Nehari_f}, $J$ is bounded below on $\mN$. From Ekeland's variational principle 
(see e.g. \cite[Theorem 3.1]{ekeland74}), we can find a Palais-Smale sequence $(u_n)_n\subset\mN$ for $J$ at level
$c$ such that $\|u_n-v_n\|_V\to 0$ as $n\to\infty$. Lemma \ref{lem:Nehari_f} (iii) implies that $(u_n)_n$ is bounded 
and therefore, up to a subsequence, we may assume $u_n\weakto u$ weakly in $H^s(\R^N)$. 
Since $J'$ is weakly sequentially continuous, we find $J'(u)=0$. Now, if $u\neq 0$, then $u\in\mN$ and it follows that
$$
c\leq J(u)=\left(\frac12-\frac1p\right)\|u\|_V^2\leq\liminf_{n\to\infty}\left(\frac12-\frac1p\right)\|u_n\|_V^2=\liminf_{n\to\infty}J(u_n)=c.
$$
Hence $u$ is a critical point of $J$ at level $c$.
In the case where $u=0$ holds, we claim that we can find a sequence $(y_n)_n\subset\R^N$ and $\delta>0$ such that
\begin{equation}\label{eqn:liminf_delta_f}
  \liminf_{n\to\infty}\int_{B_1(0)} (y_n\ast u_n)^2\, dx \geq \delta>0.
\end{equation}
Indeed, if this were false, the concentration-compactness Lemma (see \cite{CZ_Rabinowitz92} and \cite[Lemma 2.2]{FQT2013})
would imply $\|u_n\|_{L^p}\to 0$ as $n\to\infty$ and therefore
$$
c=\llim_{n\to\infty} J(u_n)=\llim_{n\to\infty}\left(\frac12-\frac1p\right)\int_{\R^N}Q(x)|u_n(x)|^p\, dx =0,
$$
contradicting the fact that $c>0$.

Now, we remark that $(y_n)_n$ must be unbounded, since we are assuming $u_n\rightharpoonup 0$.
Hence, going to a subsequence, if needed, we can assume $|y_n|\to\infty$, $y_n\ast u_n\rightharpoonup w$ 
and $u_n(x-y_n)\to w(x)$ for a.e. $x\in\R^N$, as $n\to\infty$. 
The compact embedding $H^s(B_1(0))\hookrightarrow L^2(B_1(0))$ and \eqref{eqn:liminf_delta_f} together give $w\neq 0$.
Furthermore, Lemma \ref{lem:Nehari_f} (i) gives for every $t_n>0$,
\begin{align*}
  J(u_n)\geq J(t_nu_n) &= J_\infty(t_n(y_n\ast u_n)) +\frac{t_n^2}{2}\int_{\R^N}(V(x-y_n)-1)(y_n\ast u_n)^2\, dx \\
  &\quad -\frac{t_n^p}{p}\int_{\R^N}(Q(x-y_n)-1)|y_n\ast u_n|^p\, dx.
\end{align*}
Taking $t_n>0$ such that $t_n(y_n\ast u_n)\in\mN_\infty$ holds, we infer from \eqref{eqn:t_u} that 
$(t_n)_n$ is a bounded sequence, since $\int_{B_1(0)}|y_n\ast u_n|^p\, dx\to \int_{B_1(0)}|w|^p\, dx>0$ as $n\to\infty$.
Since $V(x-y_n)\to 1$, $Q(x-y_n)\to 1$ and $u_n(x-y_n)\to w(x)$ for a.e. $x\in\R^N$, as $n\to\infty$,
the dominated convergence theorem implies $\int_{\R^N}(V(x-y_n)-1)(y_n\ast u_n)^2\, dx\to 0$ and 
$\int_{\R^N}(Q(x-y_n)-1)|y_n\ast u_n|^p\, dx\to 0$, as $n\to\infty$.
We therefore conclude that
\[c=\lim_{n\to\infty} J(u_n)\geq \limsup_{n\to\infty} J_\infty(t_n (y_n\ast u_n))\geq c_\infty\]
holds, in contradiction to the assumption $c<c_\infty$. Thus, $u\neq 0$ must hold.  
Finally  by Corollary \ref{cor:positive} we have that  $u>0$ in $\R^N$.
\end{proof}
In \cite[Theorem 1.2]{FQT2013}, the authors give conditions under which $c<c_\infty$ is satisfied and therefore 
a ground state solution exists for \eqref{eqn:frac_schroed}.
On the contrary, if in addition to (V) and (Q) we require $V(x)\geq 1$ and $Q(x)\leq 1$ for a.e. $x\in\R^n$ 
with one of the inequalities being strict in a set of positive measure, then for all $u\in H^s(\R^N)$, 
$J(u)\geq J_\infty(u)$, which gives $c=c_\infty$. Moreover, assuming by contradiction that this energy level is attained, 
there would exist $u\in\mN$ satisfying $J(u)=c_\infty$, and choosing $\tau>0$ such that $\tau u\in\mN_\infty$ we would 
obtain by Lemma \ref{lem:Nehari_f}:
$$c_\infty=J(u)\geq J(\tau u)\geq J_\infty(\tau u)\geq c_\infty$$ and consequently $\tau=1$, {\em i.e.}, $u\in\mN_\infty$ and 
$J_\infty(u)=c_\infty$. The uniqueness (up to translations)
and the positivity of the ground state solution of \eqref{eqn:frac_schroed_inf} would then imply $u>0$ on $\R^N$ and therefore
\begin{align*}
J(u)-J_\infty(u)&=\left(\frac12-\frac1p\right)\int_{\R^N}(V(x)-1)u(x)^2\, dx\\
&=\left(\frac12-\frac1p\right)\int_{\R^N}(Q(x)-1)u(x)^p\, dx
\end{align*}
Since $V(x)>1$ or $Q(x)<1$ holds on a set of positive measure we would obtain $J(u)\neq J_\infty(u)$. 
This contradiction shows that the ground state level for $J$ is not attained.
In the sequel, we shall prove that in such a case a solution can still be found, provided conditions (H) and (U) are satisfied.

%%%%%%%%%%%%%%%%%%%%%%%%%%%%%%%%%%%
\section{Asymptotic estimates and Palais-Smale sequences}
\label{sec:asympt_estim}
The first part of this section is devoted to the proof of an energy estimate that will be an essential 
ingredient in the proof of Theorem \ref{thm:exist}.
As in \cite{BaLi90}, we consider convex combinations of two translates of the ground state solution $u_\infty$ and project them onto
the Nehari manifold $\mN$. We derive estimates concerning the energy of such a convex combination, showing that it can be made
smaller than $2c_\infty$, as each of the translates is moved away from the other and far away from the origin. Pointing out that the convex
combination of two translates of $u_\infty$ is an everywhere positive function, we can use \eqref{eqn:t_u} to define its projection onto $\mN$,
for which we introduce the following
\begin{notation}
Let $y,z\in\R^N$ and $\lambda\in[0,1]$. We denote by $t_\infty=t_\infty(\lambda, y, z)$ the unique positive number
(see Lemma \ref{lem:Nehari_f}) for which
$t_\infty[(1-\lambda)(y\ast u_\infty)+\lambda(z\ast u_\infty)]\in\mN$ holds.
\end{notation}

From now on, we will work   under the assumptions (V), (Q) and (H)
of Theorem \ref{thm:exist}. Furthermore, we will assume without loss of
generality that
\begin{equation}\label{eq:12}
 N+2s<\alpha<\min\{2(N+2s), \frac{p}{2}(N+2s)\}
\end{equation}
holds in (H).

Before stating and proving the above mentioned key energy estimate, we start by studying the asymptotic behavior 
of some integrals of convolution type.
\begin{lem}\label{lem:asympt_integral}
Let $\sigma, \tau\in(N,\infty)$. Setting $\mu=\min\{\sigma,\tau\}$, there holds
$$
\sup_{y\in\R^N}|y|^{\mu}\int_{\R^N}(1+|x|)^{-\sigma}(1+|x-y|)^{-\tau}\, dx<\infty.
$$
\end{lem}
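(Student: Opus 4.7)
The plan is to exploit the fact that the integrand is a product of two radial bumps of polynomial decay, each of which is integrable on $\R^N$ (since both $\sigma,\tau>N$), and that the factors cannot both be of order $1$ simultaneously once $|y|$ is large. The standard tool is a three-region decomposition according to which of $|x|$ or $|x-y|$ is large.

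First I would observe that it suffices to treat the regime $|y|\geq 2$, since for $|y|\leq 2$ the quantity $|y|^{\mu}\int_{\R^N}(1+|x|)^{-\sigma}(1+|x-y|)^{-\tau}\, dx$ is bounded by $2^{\mu}\,\|(1+|\cdot|)^{-\sigma}\|_{\infty}\,\|(1+|\cdot|)^{-\tau}\|_{L^1(\R^N)}$, which is finite because $\tau>N$. For $|y|\geq 2$, I would partition $\R^N$ as $A_1\cup A_2\cup A_3$ with
\[
A_1=\{x:|x|\leq |y|/2\},\qquad A_2=\{x:|x-y|\leq |y|/2\},\qquad A_3=\R^N\setminus(A_1\cup A_2),
\]
noting that $A_1\cap A_2=\emptyset$ and that on $A_1$ one has $|x-y|\geq |y|/2$, on $A_2$ one has $|x|\geq |y|/2$, and on $A_3$ both $|x|$ and $|x-y|$ exceed $|y|/2$.

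Next I would estimate each piece separately. On $A_1$, factor out $(1+|x-y|)^{-\tau}\leq C|y|^{-\tau}$ and bound the remaining integral by $\int_{\R^N}(1+|x|)^{-\sigma}\,dx<\infty$ (since $\sigma>N$), giving a contribution $\lesssim |y|^{-\tau}$. On $A_2$, symmetrically factor out $(1+|x|)^{-\sigma}\leq C|y|^{-\sigma}$ and use $\tau>N$ to get a contribution $\lesssim |y|^{-\sigma}$. On $A_3$, I would use whichever of the two pointwise bounds $(1+|x|)^{-\sigma}\leq C|y|^{-\sigma}$ or $(1+|x-y|)^{-\tau}\leq C|y|^{-\tau}$ gives the slower decay: whichever factor is kept is integrable in $x$ over all of $\R^N$, so
\[
\int_{A_3}(1+|x|)^{-\sigma}(1+|x-y|)^{-\tau}\,dx\;\lesssim\;\min\bigl\{|y|^{-\sigma},|y|^{-\tau}\bigr\}=|y|^{-\mu}.
\]

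Summing the three contributions yields $\int_{\R^N}(1+|x|)^{-\sigma}(1+|x-y|)^{-\tau}\,dx\lesssim |y|^{-\sigma}+|y|^{-\tau}+|y|^{-\mu}\lesssim |y|^{-\mu}$, so multiplying by $|y|^{\mu}$ gives a bound uniform in $|y|\geq 2$; combining with the trivial estimate on $|y|\leq 2$ completes the proof. There is no real obstacle here—the argument is a routine convolution splitting—so the only thing to be careful about is collecting constants uniformly in $y$ and checking that the integrability thresholds $\sigma,\tau>N$ are used exactly where needed (in the integrals of the surviving factor over $\R^N$ or over $A_1$, $A_2$).
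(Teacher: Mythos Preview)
Your proof is correct and follows essentially the same splitting idea as the paper; the only cosmetic difference is that the paper uses a two-region decomposition $\{|x-y|\le |y|/2\}$ and $\{|x-y|\ge |y|/2\}$ (your $A_1$ and $A_3$ are merged, since on both the bound $(1+|x-y|)^{-\tau}\le C|y|^{-\tau}$ already suffices), and it absorbs the small-$|y|$ case into the estimate $|y|^\mu(2/(2+|y|))^{\sigma}$ rather than treating it separately.
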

\begin{proof}
Since $|y|/2\leq |x|$ whenever $|x-y|\leq |y|/2$, we have
\begin{align*}
|y|^\mu\int\limits_{\{|x-y|\leq\frac{|y|}{2}\}}(1+|x|)^{-\sigma}&(1+|x-y|)^{-\tau}\, dx
\leq|y|^\mu \frac{2^\sigma}{(2+|y|)^\sigma} \int_{\R^N}(1+|x-y|)^{-\tau}\, dx\\
&\leq  |y|^\mu\left(\frac{2}{2+|y|}\right)^\sigma
\int_{\R^N}(1+|x|)^{-\tau}\, dx.
\end{align*}
Next, we have
\begin{align*}
|y|^\mu\int\limits_{\{|x-y|\geq\frac{|y|}{2}\}}(1+|x|)^{-\sigma}&(1+|x-y|)^{-\tau}\,
dx\leq |y|^\mu\frac{2^\tau
}{(2+|y|)^\tau}\int_{\R^N}(1+|x|)^{-\sigma}\, dx.
\end{align*}
Since $\sigma, \tau\geq \mu >N$, the conclusion follows.
\end{proof}

\begin{lem}[Energy estimate]\label{lem:energy}
Suppose (V), (Q) and (H) hold. Then there exists $R_1>0$ such that
$$
J\bigl(t_\infty[(1-\lambda)(y\ast u_\infty) + \lambda(z\ast u_\infty)]\bigr)<2c_\infty
$$
for all $\lambda\in[0,1]$, $R\geq R_1$ and $y,z\in\R^N$
satisfying $|y| \ge R$, $|z|\geq R$ and $\frac23 R\leq |y-z|\leq 2R$.
\end{lem}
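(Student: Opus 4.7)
Set $u_y := y\ast u_\infty$, $u_z := z\ast u_\infty$, $w := (1-\lambda)u_y + \lambda u_z$, and $A := \|u_\infty\|_s^2 = \int_{\R^N} u_\infty^p$, so that $2c_\infty = 2(\tfrac12-\tfrac1p)A$. The closed-form expression coming from \eqref{eqn:t_u},
\[
J(t_\infty w) = \Bigl(\tfrac12-\tfrac1p\Bigr)\, \frac{\|w\|_V^{2p/(p-2)}}{\bigl(\int_{\R^N} Q\, w^p\bigr)^{2/(p-2)}},
\]
reduces the statement to the estimate $\mathcal{R}(\lambda,y,z) := \|w\|_V^p\bigl/\bigl(\int Q w^p\bigr)^2 < (2A)^{p-2}$, uniformly for $\lambda \in [0,1]$ and for $y,z$ satisfying the assumed bounds, provided $R$ is large enough. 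The plan is to expand numerator and denominator as a leading contribution plus a first-order correction in the interaction $I := \ps{u_y}{u_z}_s$, plus corrections from $V-1$, $Q-1$, and higher-order cross integrals, and then to show that the negative gain coming from $I$ dominates all other corrections.

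\textbf{Leading part.} The Euler--Lagrange equation $(-\Delta)^s u_\infty + u_\infty = u_\infty^{p-1}$ yields $I = \int u_y u_z^{p-1}\,dx = \int u_y^{p-1} u_z\,dx$, and hence the exact identity
\[
\|w\|_s^2 = \phi(\lambda) A + 2\lambda(1-\lambda) I, \qquad \phi(\lambda) := (1-\lambda)^2 + \lambda^2.
\]
A pointwise Taylor expansion of $((1-\lambda)u_y + \lambda u_z)^p$, together with Lemma \ref{lem:asympt_integral} applied to the cross integrals $\int u_y^{p-k} u_z^k$ (using the trivial bound $u_\infty \le \|u_\infty\|_\infty$ to handle the case $(p-k)(N+2s)<N$ by localization), gives
\[
\int w^p = \psi(\lambda) A + \chi(\lambda) I + O(R^{-\mu_0}), \qquad \mu_0 > N+2s,
\]
with $\psi(\lambda) := (1-\lambda)^p + \lambda^p$ and $\chi(\lambda) := p\lambda(1-\lambda)[(1-\lambda)^{p-2} + \lambda^{p-2}]$. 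The cornerstone algebraic inequality is H\"older's inequality applied to $(1-\lambda,\lambda)$ with exponents $(p/2, p/(p-2))$, namely
\[
\phi(\lambda)^p\bigl/\psi(\lambda)^2 \leq 2^{p-2} \quad \text{on } [0,1], \text{ with equality only at } \lambda = \tfrac12.
\]

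\textbf{First-order $I$-correction and potential terms.} Writing $\mathcal{R}_\infty(\lambda,I) := (\phi A + 2\lambda(1-\lambda) I)^p\bigl/(\psi A + \chi I)^2$, a direct computation yields
\[
\partial_I \log \mathcal{R}_\infty \bigg|_{I=0,\,\lambda=1/2} = \frac{1}{A}\bigl(p - 2p\bigr) = -\frac{p}{A} < 0,
\]
so the interaction strictly lowers $\mathcal{R}_\infty$ below $(2A)^{p-2}$ in a fixed neighborhood $U$ of $\lambda = 1/2$ as soon as $I > 0$; outside $U$, the H\"older inequality already furnishes a uniform positive gap. The pointwise lower bound in \eqref{eqn:decay_u_infty} guarantees $I \geq c_0 R^{-(N+2s)}$ under the assumption $|y-z| \leq 2R$. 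For the $V,Q$ corrections, (H) combined with $V,Q \in L^\infty$ gives $(V-1)_+,\,(1-Q)_+ \leq \min(C,\kappa |x|^{-\alpha})$; after translation and splitting each integral into the interior region $|\xi| \leq |y|/2$ and the exterior $|\xi| > |y|/2$, one obtains
\[
\Bigl|\int (V-1)\,w^2\,dx\Bigr| + \Bigl|\int (1-Q)_+\,w^p\,dx\Bigr| \leq C R^{-\gamma}
\]
for some $\gamma > N+2s$; the interior contribution uses $\alpha > N+2s$, while the exterior contribution uses $N+4s > N+2s$ (for the $V$-term) and $(p-1)(N+2s) > N$ (for the $Q$-term, which is automatic since $p > 2$ and $s > 0$).

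\textbf{Conclusion and main obstacle.} Assembling the pieces, $\mathcal{R}(\lambda,y,z) \leq (2A)^{p-2} - c\, I + O(R^{-\gamma})$ uniformly in $\lambda \in [0,1]$ for some $c > 0$. Since $I \gtrsim R^{-(N+2s)}$ and $R^{-\gamma} = o(R^{-(N+2s)})$, for $R$ large the right-hand side is strictly smaller than $(2A)^{p-2}$, which gives $J(t_\infty w) < 2c_\infty$. The hardest part is the uniformity in $\lambda$: near $\lambda = 1/2$ the H\"older inequality is saturated, so one must identify the correct negative sign of the first-order $I$-correction; for $\lambda$ bounded away from $1/2$ one must ensure that the leading-order slack is not consumed by $V, Q$ or by higher-order cross corrections before the interaction can take over. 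Controlling the cross integrals $\int u_y^{p-k} u_z^k$ when $p$ is close to $2$ (where $(p-k)(N+2s)$ may fall below $N$ and Lemma \ref{lem:asympt_integral} is not directly applicable) is the most delicate computation and relies on using $u_\infty \in L^\infty$ on a ball of radius of order $R$ together with the optimal decay outside.
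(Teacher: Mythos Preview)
Your approach is correct in spirit but genuinely different from the paper's. You work with the closed-form
\[
J(t_\infty w)=\Bigl(\tfrac12-\tfrac1p\Bigr)\frac{\|w\|_V^{2p/(p-2)}}{\bigl(\int Qw^p\bigr)^{2/(p-2)}},
\]
and reduce everything to the power-mean inequality $\phi(\lambda)^p\le 2^{p-2}\psi(\lambda)^2$, which is sharp only at $\lambda=\tfrac12$; you then split into ``$\lambda$ close to $\tfrac12$'' (where the first-order $I$--correction provides strictness) versus ``$\lambda$ away from $\tfrac12$'' (where the power-mean gap absorbs all perturbations). The paper instead splits at the \emph{endpoints}: for $\lambda$ near $0$ or $1$ it uses that $t_\infty\to 1$ and $\|w_\infty\|_V^2\to\|u_\infty\|_s^2$, giving directly $J(t_\infty w_\infty)\le\tfrac32 c_\infty$; for $\lambda\in[\delta_0,1-\delta_0]$ it compares $J(t_\infty w_\infty)$ with $J_\infty(\lambda t_\infty u_\infty)+J_\infty((1-\lambda)t_\infty u_\infty)\le 2c_\infty$ and extracts the gain $-\kappa\,A_{y,z}$ from the convergence of $t_\infty$ in \eqref{eqn:conv_t_inf}. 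Your route is algebraically cleaner at leading order and makes the role of the equality case transparent; the paper's route avoids computing $\partial_I\mathcal R_\infty$ and handles the endpoint regime by a direct energy bound rather than a gap argument.

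The one place where your write-up is loose is the expansion $\int w^p=\psi A+\chi I+O(R^{-\mu_0})$ justified by a ``pointwise Taylor expansion'' and control of $\int u_y^{p-k}u_z^k$. For non-integer $p$ there is no finite Taylor expansion, and the integer-$k$ cross terms you refer to do not arise. What you actually need is only the \emph{lower} bound on $\int w^p$, and the paper obtains it from the elementary pointwise inequality (valid for all $a,b\ge0$, $p>2$)
\[
(a+b)^p\ge a^p+b^p+p\bigl(a^{p-1}b+ab^{p-1}\bigr)-C_p\,a^{p/2}b^{p/2},
\]
quoted from \cite[Lemma~2.1]{BaLi90}. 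This produces exactly your $\psi A+\chi I$ plus the single remainder $\int u_y^{p/2}u_z^{p/2}$, which Lemma~\ref{lem:asympt_integral} bounds by $C|y-z|^{-\frac p2(N+2s)}=o(I)$ thanks to $\tfrac p2(N+2s)>\alpha>N+2s$ from \eqref{eq:12}. Replacing your Taylor argument by this inequality closes the gap without altering the rest of your scheme.
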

\begin{proof}
Let us consider
\begin{equation}\label{eq:7}
 R \ge 1\quad \text{and} \quad y,z \in \R^N \quad \text{with $|y| \ge R$, $|z|\geq R$ and $\frac23 R\leq |y-z|\leq 2R$.}
\end{equation}
For such $y,z$ and $\lambda\in[0,1]$, we set $w_\infty=(1-\lambda)(y\ast u_\infty)+\lambda(z\ast u_\infty)$ and choose 
$t_\infty=t_\infty(\lambda,y,z)$ as above, {\em i.e.}, such that $t_\infty w_\infty\in\mN$.

In the following, all constants will neither depend on $R$ nor on $\lambda,y,z$. In a first step, we will give bounds 
on various terms related to the energy functional, with respect to the following nonlinear interaction term:
$$
A_{y,z}:= \int_{\R^N} (y*u_\infty)^{p-1} (z * u_\infty)\,dx=\int_{\R^N} u_\infty^{p-1} ((z-y)* u_\infty)\,dx.
$$
Let us first remark that, since $u_\infty$ is positive and radially decreasing, the estimate \eqref{eqn:decay_u_infty} gives
\begin{equation}\label{eq:8_1}
A_{y,z}\geq \left(\frac{C_1}{2}\right)^{p-1}\int_{B_1(0)}u_\infty(x-(z-y))\, dx\geq \zeta_1|z-y|^{-(N+2s)}
\end{equation}
for all $|z-y|>1$ and some constant $\zeta_1>0$.

On the other hand, from \eqref{eqn:decay_u_infty} and Lemma \ref{lem:asympt_integral} with $\sigma=(p-1)(N+2s)$ and 
$\tau=N+2s$, there exists a constant $\zeta_2>0$ independent of $y, z$ such that
\begin{equation}\label{eq:8_2}
A_{y,z}\leq \zeta_2 |z-y|^{-(N+2s)}
\end{equation}
for all $|z-y|>1$. Let now $\alpha$ be as in assumption (H) and satisfy \eqref{eq:12}. Applying
Lemma \ref{lem:asympt_integral} with $\sigma=\alpha$ and $\tau=2(N+2s)$, we obtain
$$
\int_{\R^N} (1+|x|)^{-\alpha}\{(y\ast u_\infty)^2 + (z\ast u_\infty)^2\}\, dx
  \leq C \max \{|y|^{-\alpha},|z|^{-\alpha}\}
$$
for all $y,z \in \R^N$ with some constant $C>0$. Since $|y|, |z|\geq R\geq \frac12|z-y|$,
we have, making $\zeta_1$ larger if necessary,
\begin{equation}\label{eq:10}
  \int_{\R^N} (1+|x|)^{-\alpha}\{(y\ast u_\infty)^2 + (z\ast u_\infty)^2\}\, dx \leq \zeta_1^{-1} R^{N+2s-\alpha} A_{y,z}
\end{equation}
for all $R,y,z$ satisfying \eqref{eq:7}. A further application of Lemma \ref{lem:asympt_integral} with
$\sigma=\alpha$, $\tau=p(N+2s)$, gives
$$
  \int_{\R^N} (1+|x|)^{-\alpha}\{(y\ast u_\infty)^p + (z\ast u_\infty)^p\}\, dx
  \le C' \max \{|y|^{-\alpha},|z|^{-\alpha}\}
$$
for all $y,z \in \R^N$, $|y|, |z|\geq 1$ with some constant $C'>0$. As above, making $\zeta_1$ again larger if necessary,
we can write
\begin{equation} \label{eq:13}
  \int_{\R^N} (1+|x|)^{-\alpha}\{(y\ast u_\infty)^p + (z\ast u_\infty)^p\}\, dx \le \zeta_1^{-1} R^{N+2s-\alpha} A_{y,z}
\end{equation}
for all $R,y,z$ satisfying \eqref{eq:7}. Finally, Lemma \ref{lem:asympt_integral} with
$\sigma=\tau=\frac{p}{2}(N+2s)$ yields
$$
\int_{\R^N} (y\ast u_\infty)^{\frac{p}{2}}(z\ast u_\infty)^{\frac{p}{2}}\, dx=
  \int_{\R^N} u_\infty^{\frac{p}{2}}((y-z) \ast u_\infty)^{\frac{p}{2}} \, dx \le C'' |y-z|^{-\frac{p}{2}(N+2s)}
$$
for all $y,z \in \R^N$ with some constant $C''>0$. Therefore, making $\zeta_1$ again larger if necessary we find, using \eqref{eq:12},
\begin{equation} \label{eq:14}
  \int_{\R^N} (y\ast u_\infty)^{\frac{p}{2}}(z\ast u_\infty)^{\frac{p}{2}}\, dx \leq \zeta_1^{-1} R^{N+2s-\alpha} A_{y,z}
\end{equation}
holds for all $R,y,z$ satisfying \eqref{eq:7}. We now have all the tools to estimate
\[ J(t_\infty w_\infty) = \frac{t_\infty^2}{2}\|w_\infty\|_V^2 -\frac{t_\infty^p}{p}\int_{\R^N}Q(x)(w_\infty(x))^p\, dx.\]
We start by estimating the term $\|w_\infty\|_V^2$ which we split in the following way.
\begin{align*}
  &\int_{\R^N}|\xi|^{2s}|\what{w}_\infty(\xi)|^2\, d\xi+\int_{\R^N}V(x)w_\infty^2\, dx\\
  &\quad =((1-\lambda)^2+\lambda^2)\left(\int_{\R^N}|\xi|^{2s}|\what{u}_\infty|^2\, d\xi+\int_{\R^N} u_\infty^2\, dx\right)\\
  &\quad+ 2(1-\lambda)\lambda\left(\int_{\R^N}|\xi|^{2s}\text{Re}\bigl(e^{-i(y-z)\cdot\xi}\what{u}_\infty(\xi)\overline{\what{u}_\infty(\xi)}\bigr)\, d\xi
  +\!\!\int_{\R^N}(y\ast u_\infty)(z\ast u_\infty)\, dx\right)\\
  &\quad+\int_{\R^N}(V(x)-1)w_\infty^2\, dx.
\end{align*}
The property $J_\infty'(u_\infty)=0$ implies that
\begin{equation}\label{eqn:Jprime_infty}
\begin{aligned}
\int_{\R^N}|\xi|^{2s}&\text{Re}\left(e^{-i(y-z)\cdot\xi}\what{u}_\infty(\xi)\overline{\what{u}_\infty(\xi)}\right)\, d\xi+\int_{\R^N}(y\ast u_\infty)(z\ast u_\infty)\, dx\\
  &=\int_{\R^n}(y\ast u_\infty)^{p-1}(z\ast u_\infty)\, dx= A_{y,z}=A_{z,y}.
 \end{aligned}
\end{equation}
We deduce from \eqref{eq:10} and condition (H) that
\begin{align*}
  &\int_{\R^N}(V(x)-1) |w_\infty|^2\, dx
   \leq 2\widetilde{\kappa}_1 \int_{\R^N}(1+|x|)^{-\alpha}[(y\ast u_\infty)^2 + (z\ast u_\infty)^2]\, dx\\
  &\leq 2 \widetilde{\kappa}_1 \zeta_1^{-1} R^{N+2s-\alpha} A_{y,z}\qquad \text{for all $\lambda\in[0,1]$ and $R,y,z$ satisfying \eqref{eq:7},}
\end{align*}
where $\widetilde{\kappa}_1=2^\alpha\max\{\kappa_1,\|V\|_\infty\}$.
In addition, we point out that from the condition (V), there follows
\begin{equation}\label{eqn:conv_V}
\|w_\infty\|_V^2\longrightarrow ((1-\lambda)^2+\lambda^2)\|u_\infty\|_s^2\quad\text{ as }R\to\infty,
\end{equation}
uniformly in $\lambda\in[0,1]$.
Turning to the second integral, we write
\begin{align*}
  \int_{\R^N}Q(x)(w_\infty(x))^p\, dx &= (\lambda^p+(1-\lambda)^p)\int_{\R^N} u_\infty^p\, dx\\
  &+\int_{\R^N} w_\infty^p -[ \lambda^p(y\ast u_\infty)^p+(1-\lambda)^p(z\ast u_\infty)^p ]\, dx \\
  &+\int_{\R^N}(Q(x)-1) (w_\infty(x))^p\, dx.
\end{align*}
From \cite[Lemma 2.1]{BaLi90}, there exists a constant $C=C(p)>0$ such that for all $a, b\geq 0$
\begin{equation}
(a+b)^p\geq a^p+b^p+p(a^{p-1}b+ab^{p-1})-Ca^{\frac{p}{2}}b^{\frac{p}{2}}.
\end{equation}
Hence, from \eqref{eq:14}, we obtain
\begin{align*}
  &\int_{\R^N}w_\infty^p -[ \lambda^p(y\ast u_\infty)+(1-\lambda)^p(z\ast u_\infty) ]\, dx\\
  &\geq p\lambda^{p-1}(1-\lambda)\int_{\R^N}u_\infty(x-y)^{p-1}u_\infty(x-z)\, dx\\
  &\qquad+p\lambda(1-\lambda)^{p-1}\int_{\R^N}u_\infty(x-y)u_\infty(x-z)^{p-1}\, dx\\
  &\qquad-C\lambda^{\frac{p}{2}}(1-\lambda)^{\frac{p}{2}}\int_{\R^N}u_\infty(x-y)^{\frac{p}{2}}u_\infty(x-z)^{\frac{p}{2}}\, dx\\
  &\geq \bigl[p(\lambda^{p-1}(1-\lambda)+\lambda(1-\lambda)^{p-1})-C\zeta_1^{-1} R^{N+2s-\alpha}\bigr]A_{y,z}
\end{align*}
for $\lambda\in[0,1]$ and $R,y,z$ satisfying \eqref{eq:7}. Moreover, condition (H) as well as
\eqref{eq:13}  imply that, setting $\widetilde{\kappa}_2=2^\alpha\max\{\kappa_2,\|Q\|_\infty\}$,
\begin{align*}
  \int_{\R^N}(Q(x)-1) (w_\infty(x))^p\, dx
 & \geq -2^{p-1}\widetilde{\kappa}_2 \int_{\R^N} (1+|x|)^{-\alpha}\{(y\ast u_\infty)^p + (z\ast u_\infty)^p\}\, dx\\
 & \geq -2^{p-1}\widetilde{\kappa}_2\zeta_1^{-1} R^{N+2s-\alpha} A_{y,z},
\end{align*}
for $\lambda\in[0,1]$ and $R,y,z$ satisfying \eqref{eq:7}. Let us also remark that the assumption (Q) ensures
\begin{equation}\label{eqn:conv_Q}
\int_{\R^N}Q(x)(w_\infty(x))^p\, dx \longrightarrow ((1-\lambda)^p+\lambda^p)\int_{\R^N}u_\infty^p\, dx\quad\text{as }R\to\infty,
\end{equation}
uniformly in $\lambda\in[0,1]$. Combining this last inequality with \eqref{eqn:t_u} and \eqref{eqn:conv_V}, we find
\begin{equation}\label{eqn:conv_t_inf}
0<t_\infty=\left(\frac{\|w_\infty\|_V^2}{\int_{\R^N}Q(x)(w_\infty(x))^p\, dx}\right)^{\frac{1}{p-2}}
\longrightarrow \left(\frac{(1-\lambda)^2+\lambda^2}{(1-\lambda)^p+\lambda^p}\right)^{\frac{1}{p-2}},
\end{equation}
as $R\to\infty$, uniformly in $\lambda\in[0,1]$.
Since
\begin{align*}
&J(t_\infty w_\infty)=\left(\frac12-\frac1p\right) t_\infty^2\|w_\infty\|_V^2
\quad \leq \left(\frac12-\frac1p\right) t_\infty^2\Bigl[((1-\lambda)^2+\lambda^2)\|u_\infty\|_s^2\\
&\qquad+2\lambda(1-\lambda)A_{y,z}+2\widetilde{\kappa}_1\zeta_1^{-1}R^{N+2s-\alpha}A_{y,z} \Bigr]
\end{align*} and $c_\infty=\left(\frac12-\frac1p\right)\|u_\infty\|_s^2$,
we can find some $R_0\geq 1$ and
$0<\delta_0<1$ such that
\begin{equation}\label{eq:17}
J(t_\infty w_\infty) \leq \frac{3}{2}c_\infty
\end{equation}
for all $\lambda\in[0,\delta_0)\cup(1-\delta_0,1]$ and $R,y,z$
satisfying \eqref{eq:7} with $R\geq R_0$.

\bigskip

\noindent On the other hand, the above estimates together give for $R, y, z$ satisfying \eqref{eq:7} and $\lambda\in[\delta_0,1-\delta_0]$:
\begin{equation}\label{eq:15}
\begin{aligned}
  &J(t_\infty w_\infty)-2c_\infty \leq  J(t_\infty w_\infty)- J_\infty(\lambda t_\infty u_\infty) - J_\infty((1-\lambda)t_\infty u_\infty) \\
  &\qquad\leq -t_\infty^p\lambda(1-\lambda)\Bigl\{(1-\lambda)^{p-2}+\lambda^{p-2}-t_\infty^{2-p} -\zeta_3^{-1} R^{N+2s-\alpha}\Bigr\}A_{y,z},
\end{aligned}
\end{equation}
for some constant $\zeta_3>0$, where we used the fact that $t_\infty$ is bounded below away from $0$, uniformly in $y, z$ and $\lambda$.
According to \eqref{eqn:conv_t_inf} and since $\alpha>N+2s$, there exists $R_1\geq R_0$ such that
$$
(1-\lambda)^{p-2}+\lambda^{p-2}-t_\infty^{2-p}-\zeta_3^{-1}R^{N+2s-\alpha}\geq \delta_0^p
$$
for all $\lambda\in[\delta_0, 1-\delta_0]$ and $R\geq R_1$. With this choice, \eqref{eq:15} gives
$$
J(t_\infty w_\infty)\leq 2c_\infty -\kappa_3\delta_0^{p+2}A_{y,z}
$$
for all $\lambda\in[\delta_0,1-\delta_0]$ and $R\geq R_1$, with some constant $\kappa_3>0$. This, together with \eqref{eq:17} shows
$J(t_\infty w_\infty)<2c_\infty$ for every $\lambda\in[0,1]$ and $R, y, z$ satisfying \eqref{eq:7}
with $R\geq R_1$.
\end{proof}

The last result in this section describes the behavior of the bounded
Palais-Smale sequences for $J$, and shows that the same kind of
splitting as in the case $s=1$ studied by Benci and Cerami
\cite{benci-cerami87} (see also \cite[Proposition II.1]{BaLions97}
or \cite[Theorem 8.4]{WILLEM}) also occurs in the fractional case
$0<s<1$.
\begin{lem}\label{lem:Benci-Cerami}
  Let $(u_n)_n\subset H^s(\R^N)$ be a bounded sequence, for which $J'(u_n)\to0$ as $n\to\infty$.
  Then, there exist $\ell\in\N\cup\{0\}$, sequences $(x_n^i)_n\subset\R^N$, $1\leq i\leq \ell$, and $\overline{u}, w_1,\ldots, w_\ell\in H^s(\R^N)$ satisfying
  (up to a subsequence)
  \begin{itemize}
    \item[(i)] $J'(\overline{u})=0$,
    \item[(ii)] $J_\infty'(w_i)=0$, $i=1, \ldots, \ell$,
    \item[(ii)] $|x_n^i|\to\infty$ and $|x_n^i-x_n^j|\to\infty$ as $n\to\infty$ for $1\leq i\neq j\leq \ell$,
    \item[(iii)] $\bigl\|u_n-[\overline{u}+\suml_{i=1}^\ell x_n^i\ast w_i]\bigr\|_s\to 0$ as $n\to\infty$ and
    \item[(iv)] $J(u_n) \to J(\overline{u})+\suml_{i=1}^\ell J_\infty(w_i),$ as $n\to\infty$.
  \end{itemize}
\end{lem}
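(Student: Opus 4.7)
The plan is the iterated-extraction scheme of Benci--Cerami, adapted to the nonlocal setting. I would start by extracting the profile at the origin: since $(u_n)$ is bounded in $H^s(\R^N)$, pass to a subsequence so that $u_n\weakto\bar u$ weakly, in $L^p_{\text{loc}}$ and pointwise a.e. Weak sequential continuity of $J'$ combined with $J'(u_n)\to 0$ gives $J'(\bar u)=0$, which is (i). Setting $r_n^{(1)}:=u_n-\bar u$, the Hilbertian structure associated to $\|\cdot\|_V$ yields $\|u_n\|_V^2=\|\bar u\|_V^2+\|r_n^{(1)}\|_V^2+o(1)$, and Brezis--Lieb gives the corresponding split for $\int_{\R^N} Q|u_n|^p\,dx$. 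Because $r_n^{(1)}\weakto 0$ while $V-1$ and $Q-1$ vanish at infinity, splitting the domain into a large ball and its complement (using $L^2_{\text{loc}}$-compactness inside and smallness of $V-1,Q-1$ outside) lets me replace $V,Q$ by $1$ modulo $o(1)$. The outcome is the energy identity $J(u_n)=J(\bar u)+J_\infty(r_n^{(1)})+o(1)$ and, by the same argument applied to the derivatives, $J'_\infty(r_n^{(1)})\to 0$ in $H^{-s}(\R^N)$.

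The dichotomy comes next. If $r_n^{(1)}\to 0$ in $L^p(\R^N)$, then testing $J'_\infty(r_n^{(1)})$ against $r_n^{(1)}$ forces $\|r_n^{(1)}\|_s\to 0$ and I stop with $\ell=0$. Otherwise the fractional Lions concentration-compactness principle (already used in Proposition~\ref{prop:c<cinf_f}, cf. \cite{FQT2013}) gives $\delta>0$ and $x_n^1\in\R^N$ with $\int_{B_1(x_n^1)}|r_n^{(1)}|^2\,dx\ge\delta$; since $r_n^{(1)}\weakto 0$, necessarily $|x_n^1|\to\infty$, and the translated sequence $(-x_n^1)\ast r_n^{(1)}$ converges weakly in $H^s$ to some $w_1\neq 0$ by the compact embedding $H^s(B_1)\embed L^2(B_1)$. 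Translation invariance of $J_\infty$ together with $J'_\infty(r_n^{(1)})\to 0$ gives $J'_\infty(w_1)=0$. I then iterate by setting $r_n^{(2)}:=r_n^{(1)}-x_n^1\ast w_1$: a Brezis--Lieb computation after translation yields $J_\infty(r_n^{(1)})=J_\infty(w_1)+J_\infty(r_n^{(2)})+o(1)$ and $J'_\infty(r_n^{(2)})\to 0$, and applying the dichotomy to $r_n^{(2)}$ either terminates the process with $\ell=1$ or produces a new pair $x_n^2,w_2$. The divergences $|x_n^j|\to\infty$ and $|x_n^i-x_n^j|\to\infty$ are forced, because otherwise the shifted residuals would fail to converge weakly to $0$ after the translation used to detect the next bubble. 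Finally, Lemma~\ref{lem:Nehari_f} applied to $J_\infty$ provides a uniform lower bound $\|w_k\|_s\ge\rho>0$, and the iterated norm splitting $\sum_k\|w_k\|_s^2\le\limsup_n\|u_n\|_s^2<\infty$ forces termination after finitely many steps, yielding (ii)--(iv).

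The main obstacle is the bookkeeping of the splittings in the presence of the nonlocal quadratic form and of the spatially varying weights. The Fourier-side seminorm $\int_{\R^N}|\xi|^{2s}|\what u|^2\,d\xi$ splits cleanly because it is Hilbertian, so the delicate point is showing that after subtracting a bubble that escapes to infinity, every interaction of the residual with $V-1$, $Q-1$ and with the previously extracted translates $x_n^j\ast w_j$ vanishes in the limit. These interaction estimates are slightly more subtle than in the local case $s=1$ but follow by dominated convergence once one exploits the separation $|x_n^i-x_n^j|\to\infty$ together with the polynomial decay of the critical points of $J_\infty$ recorded in \eqref{eqn:decay_u_infty}.
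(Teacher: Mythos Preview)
Your proposal is correct and follows essentially the same Benci--Cerami iteration as the paper: extract the weak limit $\bar u$, pass from $J$ to $J_\infty$ on the remainder using the decay of $V-1,Q-1$ and Br\'ezis--Lieb, then apply the concentration-compactness dichotomy repeatedly. One small remark: the decay estimate \eqref{eqn:decay_u_infty} you invoke in the last paragraph is stated only for the ground state $u_\infty$, not for arbitrary critical points $w_i$ of $J_\infty$; fortunately no pointwise decay is needed for the interaction terms, since the Hilbertian splitting of the $H^s$-seminorm and standard Br\'ezis--Lieb arguments (as in \cite[Lemma~A.2]{BaLions97}) handle the cross terms using only $w_i\in H^s(\R^N)$ and $|x_n^i-x_n^j|\to\infty$. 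Your termination via the norm bound $\sum_k\|w_k\|_s^2\le\limsup_n\|u_n\|_s^2$ is equivalent to the paper's, which instead uses $J_\infty(w_k)\ge c_\infty>0$ together with the energy splitting.
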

\begin{proof}
Since $(u_n)_n$ is a bounded sequence in $H^s(\R^N)$ we may assume, up to a subsequence, that
$u_n\weakto\overline{u}$ for some $\overline{u}\in H^s(\R^N)$, and the weak sequential continuity of $J'$ 
implies $J'(\overline{u})=0$.

\medskip

\textbf{Step 1}: Let $v_n^1:=u_n-\overline{u}$ for all $n\in\N$. Since $v_n^1\weakto 0$ in $H^s(\R^N)$ and 
since $V(x)\to 1$, resp. $Q(x)\to 1$ for $|x|\to\infty$, the compact embeddings $H^s(B_R(0))\hookrightarrow L^q(B_R(0))$, 
$R>0$, $2\leq q<2_s^\ast$ (resp. $2\leq q<\infty$ if $N=1$ and $s\geq\frac12$) gives
\begin{equation}\label{eqn:VQ}
  \int_{\R^N}(V(x)-1) |v_n^1|^2\, dx \to 0\quad\text{ and }\int_{\R^N}(Q(x)-1)|v_n^1|^p\, dx \to 0 \quad\text{ as }n\to\infty,
\end{equation}
Therefore, as $n\to\infty$, we find (up to a subsequence):
$J_\infty(v^1_n)=J(v^1_n) + o(1)$
\[= J(u_n) - J(\overline{u}) +\frac1p\int_{\R^N}Q(x) \bigl(|u_n|^p-|\overline{u}|^p-|v_n^1|^p\bigr)\, dx + o(1)
    = J(u_n)-J(\overline{u})+o(1),\]
where in the last step we have used the Br\'ezis-Lieb lemma \cite{brezis-lieb83}.
For $\varphi\in H^s(\R^N)$ with $\|\varphi\|=1$, we obtain moreover that
\begin{align*}
  J_\infty'(v^1_n)\varphi- J'(u_n)\varphi &+ J'(\overline{u})\varphi= \int_{\R^N}(1-V(x))v_n^1 \varphi\, dx\\ 
  &+ \int_{\R^N} (Q(x)-1)|v_n^1|^{p-2}v_n^1\varphi\, dx\\
 & +\int_{\R^N} Q(x)[ |u_n|^{p-2}u_n-|\overline{u}|^{p-2}\overline{u}-|v_n^1|^{p-2}v_n^1]\varphi\, dx
\end{align*}
tends uniformly to $0$ as $n\to\infty$, using \eqref{eqn:VQ} and a similar argument as in \cite[Lemma A.2]{BaLions97}.
Since $J'(u_n)\to 0$ as $n\to\infty$ and $J'(\overline{u})=0$, we find
\[J'_\infty(v^1_n)\to 0,\quad \text{ as }n\to\infty.\]
\textbf{Step 2}: Let \[\zeta=\limsup_{n\to\infty}\left(\sup_{y\in\R^N}\int_{B_1(y)}\bigl(v_n^1(x)\bigr)^2\, dx\right).\]
If $\zeta=0$, then the concentration-compactness Lemma gives $\|v_n^1\|_{L^p}\to 0$ as $n\to\infty$, and we infer
$$
 \|u_n-\overline{u}\|_s^2=\|v_n^1\|_s^2 = J_\infty'(v_n^1)v_n^1 + \int_{\R^N} |v_n^1|^p\, dx\to 0, \quad\text{as }n\to\infty.
$$
Hence $u_n\to \bar{u}$ in $H^s(\R^N)$ as $n\to\infty$, and the proof is complete.
In the case where $\zeta>0$, passing to a subsequence, we can find a sequence $(x_n^1)_n\subset\R^N$ satisfying
$|x_n^1|\to\infty$ as $n\to\infty$ and
\[\int_{B_1(0)} \bigl(v_n^1(x+x_n^1)\bigr)^2\, dx=\int_{B_1(x_n^1)}\bigl(v_n^1(x)\bigr)^2\, dx > \frac{\zeta}{2}\]
for all $n$. Since $((-x_n^1)\ast v_n^1)_n$ is bounded, going to a further subsequence if necessary, we obtain
$(-x_n^1)\ast v_n^1\rightharpoonup w_1\neq 0$, using the compactness of the embedding $H^s(B_1(0))\hookrightarrow L^2(B_1(0))$.
Furthermore, the weak sequential continuity of $J_\infty'$ and the invariance under translations of $J_\infty$ give 
for every $\varphi\in H^s(\R^N)$,
$$
J_\infty'(w_1)\varphi=\llim_{n\to\infty}J_\infty'\bigl((-x_n^1)\ast v_n^1\bigr)\varphi=\llim_{n\to\infty}J_\infty'(v_n^1)(x_n^1\ast\varphi)=0.
$$
Setting $v_n^2:= v_n^1-(x_n^1\ast w_1)$, it follows that $v_n^2\rightharpoonup 0$ in $H^s(\R^N)$, and the same 
arguments as above, applied to $J_\infty$, give
\[J_\infty(v_n^2)= J_\infty(v_n^1) -J_\infty(w_1)+o(1)=J(u_n)-J(\bar{u})-J_\infty(w_1)+o(1),\]
and $J_\infty'(v_n^2)\varphi-J_\infty'(v_n^1)\varphi+J_\infty'(w_1)(-x_n^1\ast\varphi)\to 0$ uniformly for $\|\varphi\|=1$, as $n\to\infty$.

\medskip

We conclude that $J_\infty'(v_n^2)\to 0$ as $n\to\infty$ and iterate the above procedure.
At each step we choose a sequence $(x_n^i)_n\subset\R^N$ such that $|x_n^i|\to\infty$ and $|x_n^i-x_n^j|\to\infty$ for all $i\neq j$, 
as $n\to\infty$, and obtain a critical point $w_i$ of $J_\infty$ such that with 
$v_n^{i+1}:=v_n^i-x_n^i\ast w_i=u_n-\overline{u}-\suml_{j=1}^ix_n^j\ast w_j$ there holds
(up to a subsequence)
$J_\infty(v_n^{i+1})= J(u_n)-J(\bar{u})-\suml_{j=1}^i J_\infty(w_j)+o(1)$, and $J_\infty'(v_n^{i+1})\to 0$
as $n\to\infty$.
Since $J_\infty(w)\geq c_\infty>0$ holds for every nontrivial critical point $w$ of $J_\infty$,
and since the boundedness of $(u_n)_n$ implies that $\sup\limits_{n\in\N}J(u_n)<\infty$,
the procedure has to stop after a finite number of steps.
\end{proof}

%%%%%%%%%%%%%%%%%%%%%%%%%%%%%%%%%%%%
\section{Existence of a nontrivial solution}
\label{sec:existence_proof}
Assuming the conditions (V), (Q), (H) and (U), we now prove the existence of a nontrivial solution to \eqref{eqn:frac_schroed}, 
using the method of Bahri and Li \cite{BaLi90} (see also \cite{ew12}).
First note that $c\leq c_\infty$ holds, as can be deduced from \eqref{eqn:conv_V}, \eqref{eqn:conv_Q} and \eqref{eqn:conv_t_inf} 
by setting $\lambda=0$.
If $c<c_\infty$ then Proposition \ref{prop:c<cinf_f} gives the desired conclusion.

In the case $c=c_\infty$, we consider the barycenter map
$\beta$: $H^s(\R^N)\backslash\{0\}$ $\to$ $\R^N$ given by
\[\beta(u) = \frac{1}{\|u\|_{L^p}^p}\int_{\R^N}\frac{x}{|x|} |u(x)|^p\, dx, \quad u\in H^s(\R^N)\backslash\{0\}.\]
This mapping is continuous, and even uniformly continuous
on the bounded subsets of $H^s(\R^N)\backslash\{u\in H^s(\R^N)\, :\, \|u\|_{L^p}<r\}$ for any $r>0$.
Moreover, $|\beta(u)|<1$ for every $u\neq 0$.
For each $b\in B_1(0)\subset\R^N$ we now set
$$
I_b:=\inf_{\substack{u\in\mN\\ \beta(u)=b}}J(u) \geq c
$$
and distinguish two cases.

\noindent
{\bf Case 1}: $c=c_\infty=I_b$ for some $|b|<1$.\\
Here, we claim that $J$ has a nontrivial critical point at level $I_b=c=c_\infty$.
Indeed, let $(v_n)_n\subset\mN$ with $\beta(v_n)=b$ for all $n\in\N$ be a minimizing sequence for $I_b$. 
Since by Lemma \ref{lem:Nehari_f}, $(v_n)_n$ is bounded and $\mN$ is bounded away from $0$, 
we may choose, by the uniform continuity of $\beta$ on bounded subsets of $\mN$,
some $\delta>0$ such that for every $n\in\N$: $|\beta(v)|<\frac{1+|b|}{2}$ for all $v\in\mN$ with $\|v-v_n\|_s<\delta$.
According to Ekeland's variational principle, we can find a Palais-Smale sequence $(u_n)_n\in\mN$ for which
$J(u_n)\to I_b$ and $\|u_n-v_n\|_s\to 0$ holds, as $n\to\infty$. In particular, we find that $|\beta(u_n)|<\frac{1+|b|}{2}$
holds for $n$ large enough.
Assuming by contradiction that $J$ has no non-trivial critical point, the assumption
$c=c_\infty$, Lemma \ref{lem:Benci-Cerami} (iv) and the uniqueness of the ground state of \eqref{eqn:frac_schroed_inf} allow us to find,
going to a subsequence of $(u_n)_n$ if necessary,
a sequence $(x_n)_n\subset\R^N$ such that
$|x_n|\to\infty$ and $\|u_n-(x_n\ast u_\infty)\|_s\to 0$, as $n\to\infty$.
Since, also, $\|x_n\ast u_\infty\|_{L^p}=\|u_\infty\|_{L^p}>0$ holds for all $n$, and $|\beta(x_n\ast u_\infty)|\to 1$ as $n\to\infty$, the
uniform continuity of $\beta$ gives
\[1=\lim_{n\to\infty}|\beta(x_n\ast u_\infty)|\leq \limsup_{n\to\infty}|\beta(u_n)|\leq\frac{1+|b|}{2}.\]
This contradicts our assumption $|b|<1$, and therefore shows that $J$ has a critical point $u$ at level $c=c_\infty$.
This function $u$ is positive by   Corollary \ref{cor:positive}.

\noindent
{\bf Case 2}: $c=c_\infty<I_b$ for every $|b|<1$.\\
\noindent
In this case we will show that $J$ possesses a critical point  at some level $c_0\in [I_b, 2c )$ 
and Corollary \ref{cor:positive} will yield the conclusion.\\
For $R>0$, let $y=(0,\ldots,0,R)\in\R^N$ and consider the open ball
$$
\Omega_R:= B_{\frac43 R}(\textstyle{\frac{y}{3}})=\left\{ (1-\lambda)y + \lambda z\in\R^N\, :\, 0\leq \lambda< 1, \, z\in\partial\Omega_R\right\}.
$$
We define a min-max level $c_0$ as follows. Let $R\geq R_1$ where $R_1$ is given in Lemma \ref{lem:energy}, and consider
the projection of $\partial\Omega_R$ onto the Nehari manifold: $\gamma_0$: $\partial\Omega_R$ $\to$ $\mN$ given by
$$
\gamma_0(z):=t_\infty\cdot (z\ast u_\infty)\quad \text{ for all }z\in\partial\Omega_R,
$$
where $t_\infty=t_\infty(1,0,z)$ in the notation of Section \ref{sec:asympt_estim}.
We set $\ds\Gamma_R:=\{ \gamma\,:\, \overline{\Omega}_R \to \mN\, :\, \gamma\text{ continuous and } 
\gamma|_{\partial\Omega_R}=\gamma_0\}$
and consider the min-max energy level
\begin{equation}
  c_0:=\inf_{\gamma\in\Gamma_R}\max_{x\in\overline{\Omega}_R}J(\gamma(x)).
\end{equation}
We claim that for $b=(0,\ldots,0,|b|)$ with $0<|b|<1$ fixed, there holds
\begin{equation}
I_b\leq c_0< 2c_\infty
\end{equation}
for $R$ large enough.
To show the left-hand inequality, consider for each $\gamma\in\Gamma_R$ the homotopy
$\eta$: $[0,1]\times\overline{\Omega}_R$ $\to$ $\overline{B_1(0)}$ given by
$\eta(\theta, x)= \theta \beta(\gamma(x)) + (1-\theta) g(x)$,  $0\leq \theta\leq 1$, $x\in\overline{\Omega}_R$, where
$g$ is the homothetic contraction of $\overline{\Omega_R}$ onto the closed unit ball in $\R^N$:
$$
g(x)=\left\{\begin{array}{cc}\ds\tau(x)\frac{x}{|x|} & \text{if } x\neq 0 \\ \\ 0 & \text{if }x=0\end{array}\right. \text{ with}\quad
    \tau(x)=\frac{1}{5R}\left[ \sqrt{15|x|^2+ x_N^2}- x_N\right], \quad x\in\overline{\Omega_R}.
$$
Since $\gamma|_{\partial\Omega_R}=\gamma_0$ and $\theta\beta(\gamma_0(z))+(1-\theta) g(z)\to \frac{z}{|z|}$ uniformly
for $z\in\partial\Omega_R$ and $0\leq \theta\leq 1$, as $R\to\infty$, we obtain $b\notin \eta([0,1]\times\partial\Omega_R)$
for $R$ large enough.
The homotopy invariance of the degree then implies $\text{deg}(\beta\circ \gamma,\Omega_R,b)=\text{deg}(g,\Omega_R,b)=1$.
Using the existence property, we can therefore find some $x_b\in\Omega_R$ for which $\beta(\gamma(x_b))=b$,
and this gives $I_b\leq J(\gamma(x_b))$. Since $\gamma\in\Gamma_R$ was arbitrarily chosen, we obtain $I_b\leq c_0$.
Lemma \ref{lem:energy} gives the second inequality, when we consider $\gamma_2\in\Gamma_R$ given by
$$
\gamma_2((1-\lambda)y + \lambda z)= t_\infty((1-\lambda)(y\ast u_\infty) 
+ \lambda(z\ast u_\infty)),\qquad \lambda\in[0,1],\; z\in\partial\Omega_R.
$$
In particular, the min-max. level $c_0$ satisfies
\begin{equation}\label{eqn:crit_level}
  c=c_\infty< c_0 <2c_\infty
\end{equation}
for $R$ large enough.

Next, we point out that $\mN$ is a closed connected $C^1$-submanifold of the Banach space $H^s(\R^N)$. 
Moreover, for $R\geq R_1$, the the family $\mF_R=\{\gamma(\overline{\Omega}_R)\subset\mN\, :\, \gamma\in\Gamma_R\}$ 
of compact subsets of $\mN$ is a homotopy-stable family with boundary $\gamma_0(\partial\Omega_R)\subset\mN$, in the sense
of Ghoussoub \cite[Definition 3.1]{GHOU}.
Since $J(\gamma_0(z))$ converges to $c_\infty$ as $R\to\infty$, uniformly for $z\in\partial\Omega_R$, we have furthermore
$$
\max_{z\in\partial\Omega_R}J(\gamma_0(z))< c_0=\inf_{\gamma\in\Gamma_R}\max_{x\in\overline{\Omega}_R}J(\gamma(x))
  =\inf_{A\in\mF_R}\sup_{v\in A}J(v)
$$
for large $R$, and the min-max principle \cite[Theorem 3.2]{GHOU}, gives the existence of a Palais-Smale sequence
$(u_n)_n\subset\mN$ for $J$ at level $c_0$. Since $J$ is coercive on $\mN$, $(u_n)_n$ is bounded in $H^s(\R^N)$.
Moreover, since $J_\infty(w)>2 c_\infty$ holds for every sign-changing critical point $w$ of $J_\infty$
(see  Proposition \ref{prop:sign}), the estimate \eqref{eqn:crit_level}, the assumption (U)
%uniqueness of the positive solution of \eqref{eqn:frac_schroed_inf}
and Lemma \ref{lem:Benci-Cerami} imply that, up to a subsequence, $u_n\to \overline{u}$ as $n\to\infty$ for some critical point
$\overline{u}\neq 0$ of $J$ which satisfies $J(\overline{u})=c_0<2c=2c_\infty$.
This concludes the proof.
$\qed$

%%%%%%%%%%%%%%%%%%%%%%%%%%%%%%%%%%%%%%%%%%%%%%%%%%%%%%%%%%%%%%%

\end{document}